    \numberwithin{equation}{subsection}
    \newtheorem{propo}{Proposition}[section]
    \newtheorem{corol}[propo]{Corollary}
    \newtheorem{theor}[propo]{Theorem}
    \newtheorem{lemma}[propo]{Lemma}
    \theoremstyle{definition}
    \theoremstyle{remark}
\newcommand{\ZZ}{\mathbb{Z}}
\newcommand{\RR}{\mathbb{R}}
\newcommand{\Z}{\mathbb{Z}}
\newcommand{\Hom}{\operatorname{Hom}}
\newcommand{\Ker}{\operatorname{Ker}}
\newcommand{\Int}{\operatorname{Int}}
\newcommand{\card}{\operatorname{card}}
\newcommand{\id}{\operatorname{id}}
\let\oldmarginpar\marginpar
\renewcommand\marginpar[1]{\oldmarginpar{\footnotesize #1}}
\begin{document}

    \title[{Additive posets and CW-complexes}]{Additive posets,  CW-complexes, and graphs}

    \author[Vladimir Turaev]{Vladimir Turaev}
    \address{
   Department of Mathematics \newline
    \indent  Indiana University \newline
    \indent Bloomington IN47405, USA\newline
    \indent $\mathtt{vturaev@yahoo.com}$}

                     \begin{abstract}  We introduce  and study   additive posets.
                     We show that the top homology group (with coefficients in $\Z/2\Z$) of a  finite dimensional CW-complex carries a   structure of an additive poset invariant under subdivisions. Applications    to  CW-complexes and graphs are discussed.

 \end{abstract}\footnote{AMS Subject
                    Classification:  05C10, 05C38, 06A11,  57Q05}
                     \maketitle


   \section {Introduction}

 This paper started with the
following elementary question: which 1-dimensional homology classes   of   graphs
are realizable by embedded circles? For example, if a graph~$X$ is a wedge of
  circles, then these circles are embedded in~$X$ and represent
generators of  the homology group $H=H_1(X; \Z/2\Z)$. All other elements of~$H$ cannot
be realized by embedded circles. An analysis of this phenomenon leads us to the
formalism of additive posets. The notion of an additive poset  introduced here combines the   concepts of an abelian group and a  poset in a way different from partially ordered groups.
A basic example of an additive poset is provided by the power set of a  set with the   order determined by the inclusion   and the addition determined by the symmetric difference.     We develop the theory of additive posets and, in particular, define their distinguished elements     called atoms and tiles.

 Additive posets   naturally arise    in the study of   graphs and, more generally,   CW-complexes.
  For a   CW-complex~$X$ of finite dimension $n\geq 0$,  each element of  the  top homology group  $H=H_n(X; \Z/2\Z)$
  is uniquely represented by an $n$-cycle, that is by a   finite collection of  $n$-cells of $X$ such that every $(n-1)$-cell  of $X$ is incident to an even number of $n$-cells in this collection  (counting with multiplicities).  For $a,b\in H $, we set $a\leq b$ if the collection of  $n$-cells representing~$a$ is contained in the collection of  $n$-cells representing~$b$.
  This  defines a partial order  in~$H$ and   turns~$H $  into an additive poset  preserved  under subdivisions of~$X$.  We call~$H$  the homology poset of~$X$.    We prove that for   $n \geq 2$, each finite additive poset is realizable as  the homology poset  of an   $n$-dimensional CW-complex (Theorem \ref{realizadpos}).    For $n=1$, i.e., for graphs, a similar claim does not hold: some  finite additive posets  are not realizable  as   the homology posets of  graphs (Theorem \ref{trealgraphs}).  

As a geometric application, we    estimate from below   the number of $n$-cells of an $n$-dimensional CW-complex~$X$    in terms of the partial order in   $H=H_n(X; \Z/2\Z)$.
This improves the standard estimate (the number of $n$-cells) $ \geq \dim_{\Z/2\Z} H$.  We also consider the problem of realization of elements of~$H $ by embeddings of  closed   $n$-manifolds    into~$X$.
 We show that  only  tiles of~$H $   can be realized by  such embeddings  (Theorem \ref{repr}). Only atoms  of $H$  can be realized by embeddings of closed
  connected $n$-manifolds  into~$X$.
For   $n=1$,  we have a more precise statement: a 1-dimensional homology class  of a   graph  can be  realized by an embedded circle   if and only if this homology class  is an atom (Theorem \ref{reprgraph}).  


 \section {Additive posets and their morphisms}

\subsection{Additive posets}\label{Definition and  properties} Recall that a (non-strict) partial order in a set~$P$ is a binary relation  $\leq$  over~$P$ such that  for all $a, b, c \in P $, we have:  $a \leq  a$ (reflexivity); if $a \leq   b$ and $b \leq  a$, then $a=b$ (antisymmetry); if $a \leq   b$ and $b \leq   c$, then $a \leq   c$ (transitivity). A set endowed with a partial order is called a partially ordered set or  a poset.

By an   \emph{additive  poset} we  mean a pair (an abelian group $A$,  a partial order $\leq$   in~$A$) such that for any   $a,b,c\in A  $, the following two conditions are met:

$(\ast)$           if $ b \leq a$ and $c \leq a$, then $b+c \leq a$;

$(\ast \ast)$           if $ a \leq b$ and $a \leq c$, then $a  \leq a+b+ c$.

\noindent
We   will denote the zero element of~$A$ by $0$ and will usually denote an additive poset  $(A, \leq)$  simply by $A$.

 \begin{lemma}\label{lemma1}   For any element~$a $ of an additive poset,  $a+a=0$ and $0 \leq a$.\end{lemma}

\begin{proof}
 By the definition of a partial order, $a \leq a$.   Condition $(\ast)$   implies that  $a+a \leq a$. A second application of  $(\ast)$   yields   $a+a+a \leq a$.
Also, an  application of Condition $(\ast \ast)$ to the relation $a\leq a$ yields  $ a\leq a+a+a$.
Therefore $a+a+a=a$. Consequently,  $a+a=0$ and   $0=a+a \leq a$. \end{proof}

 Lemma \ref{lemma1} shows  that all nonzero elements of an additive poset  have  order two. This allows us  to treat  additive posets as vector spaces over
the  field   $ \Z/2\Z=\{0,1\}$. Lemma \ref{lemma1} implies  that  the zero vector   is the (unique)  least element  of an  additive poset. We will often use the following corollary of this  fact: if a vector~$a $ in an additive poset satisfies  $a \leq 0$, then  $a=0$.

The  partial order in an additive poset $A$ restricted to any subgroup of~$A$ turns the latter
 into an additive poset   called an \emph{additive subposet} of $A$.

\subsection{Examples}\label{exam16}  1.   Any $\Z/2\Z$-vector space  $A$  carries the \emph{trivial partial order} $\leq_t $ defined by $0\leq_t a \leq_t a$ for all $a\in A$ (and no other relations). The pair $(A, \leq_t)$ is an additive poset.


2. Each set $I$ gives rise to an additive poset $2^I$ whose elements are arbitrary subsets of $I$.  The partial order in $2^I$ is determined by the inclusion: $X \leq Y$   if $X \subset Y$ for $X, Y \in 2^I$.
Addition  in $2^I$ is   the symmetric difference:  $X+Y =(X \backslash Y) \cup (Y\backslash X)$ for   $X, Y \in 2^I$. The zero element of $2^I$ is  the empty set. All properties of an additive poset are straightforward. We call   $2^I$ the  \emph{additive powerset}
of~$I$.

The  additive powerset $2^I$ can be described  in terms of maps $I \to\Z/2\Z$. Such maps bijectively correspond to subsets of $I$ by assigning to every set $X \subset I$ its characteristic map  $I \to\Z/2\Z$ carrying all elements of $X$ to $1$ and all elements of $I\backslash X$ to $0$.  Addition in  $2^I$  corresponds in this language to the addition of functions.  The partial order in $2^I$ is formulated in this language   as follows: two maps $a,b : I \to\Z/2\Z$ satisfy $a\leq b$   if and only if
  $a(i) \leq_t  b(i)$ for all $i\in I$, where $\leq_t$ is the trivial partial order    in  $ \Z/2\Z$.

 3. Finite subsets of a set $I$ form an additive subposet of $2^I$ denoted $2^I_f$.  It can be  equivalently defined  in terms of the $\Z/2\Z$-vector space
 $A=\oplus_{i\in I} \, (\Z/2\Z) \,i$.
 We   identify $2^I_f=A$ by assigning  to each finite set $J\subset I$ the vector $\sum_{i\in J} i\in A$.
The    addition and partial order in $2^I_f$ correspond    to the addition in~$A$ and the following partial order in~$A$: a vector     $ \sum_{i\in I} a_i i$ is smaller than or equal to a vector     $ \sum_{i\in I} b_i i$ (with $a_i, b_i\in \Z/2\Z$) if and only if  $a_i\leq_t b_i$ for all~$i$.
If   $I$ is a finite set, then $2^I=2^I_f$.


4. Given a set $I$, the finite subsets of~$I$ with an even number of elements  form an
additive subposet of $2^I_f$  denoted by  $2^I_{ev}$. 

5. Let $R$ be a Boolean ring that is a ring such that $a^2=a$ for all $a\in R$. It is   known that such a ring   is commutative and $a+a=0$ for all $a\in R$. The canonical partial order   in $R$ is defined by $a \leq b$ if $a=ab$. It is easy to check that the underlying additive group of~$R$ with this partial order   is an additive poset.  

6. For  additive posets $A$ and $B$, consider the direct sum of their underlying abelian groups with the  cartesian partial order: $(a,b) \leq (a', b')$ if $a\leq a'$ and $b \leq b'$ where $a,a'\in A$ and $b,b'\in B$. This gives an additive poset denoted $A \oplus B$.

\subsection{Morphisms of additive posets}  A  \emph{morphism} of additive posets $A\to B$ is a group homomorphism $\varphi : A \to B$ which is \emph{order-preserving} in the sense
 that for all $a, b\in A$ satisfying $a \leq b$, we have  $\varphi(a) \leq \varphi(b)$.  Additive posets and their morphisms   form a category with the obvious  composition of morphisms. An isomorphism in this category is a bijective map between additive posets which is both a group isomorphism and a poset isomorphism.

For example, a   map $g $ from a  set  $I$ to a set  $J$ induces a morphism of  additive posets $g^\ast: 2^J \to 2^I$    carrying  each subset of~$ J$ to its pre-image under~$g$. If $g$ is  injective, then it induces     a morphism  of additive posets
 $g_\ast : 2^I \to 2^J $ carrying each subset of~$ I$ to its  image under~$g$.
 If $g$ is a bijection, then $g^\ast$ and $g_\ast$ are mutually inverse isomorphisms of additive posets.

\subsection{Remarks} 1. The partial order in  a nonzero additive poset $A$ is not translation invariant. Indeed, for any nonzero $a\in A$ we have $0\leq a$. If the order is
translation invariant, then   $a=a+0 \leq a+a=0$, i.e., $a=0$, a contradiction.

2. Three elements  $a,b,c$ of   an additive poset    satisfy the relations $a \leq b$, $a\leq c$, and $a  \leq b+ c$ if and only if $a=0$.
The \lq\lq if'' part is obvious. The \lq\lq only  if'' part: by   $(\ast \ast)$, the first two relations imply that  $a  \leq a+b+ c$. If  $a  \leq b+ c$, then another application of $(\ast \ast)$ gives $$a\leq a+(a+b+c)+(b+c)=(a+a)+(b+b)+(c+c)=0.$$
Consequently,  $a=0$.

3. Given an  additive poset $A$, consider the   partial binary operation $\setminus$ on~$A$ defined by $a \setminus b=a+b$ for $a,b\in A$ such that $b \leq a$.  This   is a difference operation   in the sense of  \cite{KC}. If $A$ has a greatest element, then
 the underlying poset of $A$   with the operation $\setminus$ is a difference poset in the sense of \cite{NP, KC}.

\section{A construction of additive posets}\label{A construction of additive posets}

\subsection{Construction}\label{A construction of additive posets1} We show how to  construct   additive posets from families of linear functionals. We start with a lemma.

\begin{lemma}\label{presdold}  Let $A$ be a $\Z/2\Z$-vector space    and let $(B, \leq)$ be an additive poset. Any set  $S $ of linear maps  $ A \to B$   determines a binary relation $\preceq $ in  $A$ as follows:  $a \preceq b$ for $a,b \in A$ if (and only if) $s(a) \leq s(b)$ for all $s\in S$.  If  $\cap_{s\in S} \Ker s=0$, then  $(A, \preceq)$ is an additive poset. \end{lemma}

\begin{proof} Reflexivity and transitivity of   $\preceq $   follow from the corresponding properties of~$\leq$. To prove the antisymmetry of   $\preceq $, consider any $a,b \in A$ such that $a\preceq b$ and $b \preceq a$. Then $s(a) \leq s(b)$ and $s(b) \leq s(a)$ for all $s\in S$. Since $\leq$ is a partial order, $s(a) = s(b)$ for all $s\in S$. Therefore $a-b \in \cap_{s\in S} \Ker s=0 $ so that $a=b$.

We   verify  Condition $(\ast)$   in the definition of an additive poset. If  $a,b,c \in A  $ satisfy
$ b \preceq a$ and $c \preceq a$, then for any $s\in S$, we have $s(b) \leq s(a)$ and $s(c) \leq s(a)$. Since $s$ is an additive map and   $ (B , \leq)$   is an additive poset,
$$s(b+c)=s(b)+s(c) \leq s(a).$$   Consequently,  $b+c \preceq a$. Condition $(\ast \ast)$ is verified similarly: if  $a,b,c \in A  $ satisfy    $ a \preceq b$ and $a \preceq c$, then for all $s\in S$, we have $s(a) \leq s(b)$ and $s(a) \leq s(c)$. Since~$s$ is an additive map and  $ (B , \leq)$ is an additive poset,
$$s(a) \leq s(a)+s(b)+s(c) = s(a+b+c).$$  Consequently,  $a \preceq a+b+c $. \end{proof}

 Applying Lemma \ref{presdold} to  $B=\Z/2\Z$ with the trivial partial order $\leq_t$,  we obtain  the following theorem.

\begin{theor}\label{presd}  Let $A$ be a $\Z/2\Z$-vector space and $  A^*= \Hom(A, \Z/2\Z)$. Any  set $S \subset A^*$      determines a binary relation $\preceq $ in  $A$ as follows:  $a \preceq b$ for $a,b \in A$ if (and only if) $s(a) \leq_t s(b)$ for all $s\in S$. If  $\cap_{s\in S} \Ker s=0$, then the pair $(A, \preceq)$ is an additive poset. \end{theor}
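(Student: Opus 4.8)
The plan is to obtain Theorem~\ref{presd} as the special case of Lemma~\ref{presdold} in which the target additive poset $(B,\leq)$ is taken to be $\Z/2\Z$ equipped with the trivial partial order $\leq_t$. The first step is to confirm that $(\Z/2\Z, \leq_t)$ is indeed an additive poset: since $\Z/2\Z$ is a $\Z/2\Z$-vector space, this is exactly an instance of Example~1 in \S\ref{exam16}, where any $\Z/2\Z$-vector space with the trivial partial order is shown to be an additive poset. (Concretely, $\leq_t$ on $\Z/2\Z=\{0,1\}$ gives $0\leq_t 0$, $0\leq_t 1$, $1\leq_t 1$ and nothing else, so conditions $(\ast)$ and $(\ast\ast)$ hold trivially.)

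The second step is the identification of the data. By definition $A^*=\Hom(A,\Z/2\Z)$ is precisely the set of linear maps from $A$ to the vector space $B=\Z/2\Z$, so any subset $S\subset A^*$ is exactly a set of linear maps $A\to B$ of the kind considered in Lemma~\ref{presdold}. Moreover the relation $\preceq$ defined in the theorem, namely $a\preceq b$ iff $s(a)\leq_t s(b)$ for all $s\in S$, is verbatim the relation produced by the lemma for this choice of $B$, and the hypothesis $\cap_{s\in S}\Ker s=0$ is identical to the hypothesis of the lemma.

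With these two observations in place, the conclusion is immediate: Lemma~\ref{presdold} asserts that under the hypothesis $\cap_{s\in S}\Ker s=0$ the pair $(A,\preceq)$ is an additive poset, which is exactly the claim of the theorem. There is essentially no obstacle here, since the theorem is a direct specialization; the only thing worth stating explicitly is the reduction to Example~1 so that the hypotheses of the lemma are genuinely met. (Should one prefer a self-contained argument, one could instead rerun the proof of Lemma~\ref{presdold} with $\leq$ replaced by $\leq_t$ throughout, verifying reflexivity, transitivity, antisymmetry, and conditions $(\ast)$ and $(\ast\ast)$ directly; but invoking the lemma is cleaner and avoids repeating the computation.)
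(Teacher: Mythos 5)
Your proposal is correct and follows exactly the paper's own route: the paper obtains Theorem~\ref{presd} by specializing Lemma~\ref{presdold} to $B=\Z/2\Z$ with the trivial partial order $\leq_t$. Your explicit check that $(\Z/2\Z,\leq_t)$ is an additive poset and that the hypotheses match is a slightly more careful write-up of the same one-line deduction.
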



To ensure  the condition  $\cap_{s\in S} \Ker s=0$ in Theorem~\ref{presd}, it suffices to  require  that~$S$ generates $A^*$ as a vector space.

\subsection{Examples}\label{exam22} In the following examples, $A$ is a $\Z/2\Z$-vector space.

 1. For  $S=A^*$,  Theorem \ref{presd} yields the trivial partial order  in~$A$. Indeed, for any distinct nonzero vectors $a,b \in A$, there is a homomorphism $s: A \to \Z/2\Z$ such that $s(a)=1$ and $s(b)=0$.
This shows that $a \not\leq b$ and leaves  only   the   relations $0\leq a \leq a$ for all $a\in A$.

2. For a    nonzero vector $a_0 \in A$, consider the set $S\subset A^*$ consisting of all homomorphisms $  A \to \Z/2\Z$ which   carry  $a_0$ to~$1$.   Applying Theorem~\ref{presd}   we obtain a partial order in~$A$ which  turns $A$ into an additive poset. This partial order may be directly defined by    $0\leq a \leq  a \leq a_0$ for all $a\in A$.

3.   For a   $\Z/2\Z$-vector subspace~$H$  of~$A$ and a nonzero vector $a_0 \in H$, let $S\subset A^*$ consist  of all homomorphisms $  A \to \Z/2\Z$ which either carry  $a_0$ to~$1$ or carry  $H$ to $0$.   Applying Theorem~\ref{presd}   we obtain a partial order in~$A$ which  turns~$A$ into an additive poset. This partial order may be directly defined by    $0\leq a \leq  a$ for all $a\in A$ and $a\leq a_0$ for all $a\in H$. For $H=A$, we obtain the   previous example.



\section{Tails and independence}\label{The independence relation and atoms}



\subsection{Tails} For any element~$a$ of an additive poset~$A$, the set
 $A_a= \{b\in A \, \vert \, b \leq a\}$  is  a subgroup of $A$ as follows   from the relation $0\leq a$ and Condition $(\ast)$  in the definition of an additive poset.  The subgroup   $A_a$ is  called the \emph{tail} of~$a$.  We have  $a\in  A_a$ and $A_0=\{0\}$.

 We   say that a subgroup~$B$ of~$A$ is \emph{full} if $A_b \subset B$ for all $b\in B$.
 Clearly, the tail of any element of~$A$ is a full subgroup of~$A$.

\subsection{Independence} We say that elements $a$, $b$ of an additive poset~$A$ are      \emph{independent} (from each other)  if $a \leq a+b$. Since  $a+b \leq a+b$,    Condition $(\ast)$   in Section~\ref{Definition and  properties}   implies that  for independent $a$, $b$, we have $$b=a+(a+b) \leq a+b=b+a.$$  Thus,   the relation of independence is symmetric.  Independent  nonzero vectors  $a,b \in A$ are   incomparable in the sense that neither $a \leq b$ nor $b   \leq a$. Indeed, if, for example, $a\leq b$, then applying   $(\ast \ast)$ to the relations $ a \leq b $ and $a \leq a+b$, we obtain that $a \leq 0$, which contradicts the assumption $a\neq 0$.

Example:     elements of the  additive powerset $2^I$ of a  set~$I$ are independent if and  only if they are disjoint as subsets of $I$. The same holds for  $2^I_f$ and $2^I_{ev}$.

\begin{lemma}\label{lemma4}  Let     $A^a   $ be the set of all elements of an additive poset $ A$  independent from $a\in A$. Then  $A^a $ is a full subgroup of~$A$ and $A^a\cap A_a=\{0\}$. Also, $A^b\subset A^a$ for all $a\leq b$.  \end{lemma}

\begin{proof}   The definition of independence implies that   $0\in A^a$.
For any $b,c\in A^a$, we have  $a \leq a+b$ and $a \leq a+c$.
By Condition $(\ast \ast)$ from Section~\ref{Definition and  properties},
$$ a \leq a+(a+b) + ( a+c)=a+b+c.$$
Thus, $b+c\in A^a$, i.e., $A^a $ is a subgroup of $A$. The equality $A^a\cap A_a=\{0\}$ holds because all nonzero elements of $A^a$ are incomparable with~$a$.

To prove that $A^a$ is full, we should check that    $A_b\subset A^a$ for all $b\in A^a$. Pick any  $c\in A_b $. Then  $ c\leq  b\leq a+b$, and therefore $c \leq a+b$. Applying Condition $(\ast \ast)$  to the relations  $c\leq b$ and  $c \leq a+b$,
we obtain that $$c \leq c+b+ (a+b)=a+c.$$ Thus, $c\in A^a$.

To prove that $A^b\subset A^a$ for  $a\leq b$, pick any $c\in A^b$. Then  $a\leq b \leq b+c$. Thus, both $a \leq b$ and  $a\leq b+c$. Applying  $(\ast \ast)$, we get $a\leq a+c$ so that $c\in A^a$. \end{proof}

\begin{lemma}\label{lemma--4} Any   pairwise independent nonzero vectors in an additive poset   are linearly independent (over $\Z/2\Z$).
 \end{lemma}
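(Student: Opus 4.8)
The plan is to argue by contradiction, exploiting the fact established in Lemma~\ref{lemma4} that for each element $a$ of an additive poset~$A$, the set $A^a$ of elements independent from~$a$ is a subgroup of~$A$. Let $\{a_i\}$ be a family of pairwise independent nonzero vectors in~$A$. Suppose, seeking a contradiction, that this family is linearly dependent. A nontrivial linear relation involves only finitely many of the vectors, and since $A$ is a vector space over $\Z/2\Z$ (Lemma~\ref{lemma1}), such a relation has the form $\sum_{i\in S} a_i=0$ for some nonempty finite subset~$S$ of the index set.

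Next I would fix an index $j\in S$ and rewrite the relation as $a_j=\sum_{i\in S\setminus\{j\}} a_i$. For every $i\in S$ with $i\neq j$, the vectors $a_i$ and $a_j$ are independent, so $a_i\in A^{a_j}$. By Lemma~\ref{lemma4}, $A^{a_j}$ is a subgroup of~$A$; since it contains each such~$a_i$ and contains~$0$ (the latter covering the degenerate case $S=\{j\}$), it contains their sum. Hence $a_j\in A^{a_j}$, that is, $a_j$ is independent from itself.

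Finally, I would observe that self-independence forces $a_j=0$: by the definition of independence $a_j\leq a_j+a_j$, and since $a_j+a_j=0$ by Lemma~\ref{lemma1}, this reads $a_j\leq 0$, whence $a_j=0$ by the corollary to Lemma~\ref{lemma1}. This contradicts $a_j\neq 0$ and completes the argument. The essential input is the subgroup property of $A^{a_j}$ from Lemma~\ref{lemma4}; the only point to get right is the passage from an arbitrary linear dependence to a single vanishing $\Z/2\Z$-sum and the remark that a nonzero vector cannot lie in its own independence subgroup. No delicate estimate is involved, so I do not anticipate a serious obstacle once Lemma~\ref{lemma4} is in hand.
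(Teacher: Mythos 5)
Your argument is correct and is essentially the same as the paper's: both isolate one vector from a vanishing finite sum, use the subgroup property of $A^{a_j}$ from Lemma~\ref{lemma4} to conclude that the sum of the remaining vectors is independent from $a_j$, and then deduce $a_j\leq 0$, hence $a_j=0$. Your phrasing via self-independence ($a_j\in A^{a_j}$) and the paper's phrasing ($b_1\leq b_1+(b_2+\cdots+b_n)=0$) are the same computation.
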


\begin{proof} If the   vectors  in question  are not linearly independent, then some of them, say, $\{b_1,..., b_n\}$   satisfy $b_1+ \cdots +b_n=0$ where $n\geq 1$. For $n=1$, this contradicts the assumption $b_1 \neq 0$. Suppose that $n \geq 2$. Since the vectors $b_2,..., b_n$ are independent from~$b_1$,   their sum   is also independent   from~$b_1$, see    Lemma~\ref{lemma4}. Then
  $$b_1 \leq b_1+ (b_2+ \cdots +b_n)=0.$$ Consequently, $b_1=0$ which contradicts the assumptions of the lemma.
   \end{proof}

    For  elements $a,b$ of
a poset $P$, one writes
$a< b$ if $a \leq b$ and $a \neq b$.  A \emph{chain  of length $n \geq 1$ in $P$}   is a   sequence    $a_0,a_1,..., a_n \in P $ such that $a_0<a_1 < \cdots <a_n$.

    \begin{lemma}\label{lemma--5}
     For any
     element~$a $ of an additive poset~$A$ and any integer $n \geq 1$, there is  a   bijective correspondence between chains of length~$n$  in $A$ starting with~$a$  and
  sequences of~$n$ pairwise independent nonzero vectors in~$A^a$. The correspondence carries   a  chain   \begin{equation}\label{chain}  a=a_0<a_1<a_2 < \cdots <a_n \end{equation}
  into the sequence $b_1,..., b_n$ where $b_i=a_{i-1} +a_{i} $ for $i=1,..., n$. The inverse correspondence
  carries a sequence $b_1,..., b_n \in A$ into the chain \eqref{chain} defined by   $a_i =a+ b_1 + \ldots   + b_i $ for     $i=1,..., n$. \end{lemma}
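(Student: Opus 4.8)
The plan is to verify that the two maps in the statement are each well defined and then that they are mutually inverse. The mutual-inverse part will reduce to short telescoping computations in characteristic $2$, so the real content lies in the two well-definedness claims: that the forward map lands in sequences of pairwise independent nonzero vectors of $A^a$, and that the backward map lands in genuine chains. Throughout I would use Lemma \ref{lemma1} (every element has order two), the symmetry of independence, the subgroup structure from Lemma \ref{lemma4}, and Conditions $(\ast)$ and $(\ast \ast)$.

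First I would treat the forward map sending the chain \eqref{chain} to the vectors $b_i=a_{i-1}+a_i$. That each $b_i$ is nonzero is immediate from $a_{i-1}\neq a_i$. To see $b_i\in A^a$, note that along the chain $a=a_0\leq a_{i-1}$ and $a\leq a_i$, so Condition $(\ast \ast)$ gives $a\leq a+a_{i-1}+a_i=a+b_i$, which is exactly independence of $b_i$ from $a$. For pairwise independence of $b_i$ and $b_j$ with $i<j$, the key observation is that $b_i\leq a_i$: this follows from Condition $(\ast)$ applied to $a_{i-1}\leq a_i$ and $a_i\leq a_i$. Since $a_i\leq a_{j-1}\leq a_j$, we then have $b_i\leq a_{j-1}$ and $b_i\leq a_j$, and Condition $(\ast \ast)$ yields $b_i\leq b_i+a_{j-1}+a_j=b_i+b_j$, i.e., independence of $b_i$ and $b_j$.

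For the backward map, sending pairwise independent nonzero $b_1,\dots,b_n\in A^a$ to the sequence $a_i=a+b_1+\cdots+b_i$, I must show that $a_0<a_1<\cdots<a_n$ is a genuine chain. Strictness $a_{i-1}\neq a_i$ is clear since $a_i=a_{i-1}+b_i$ with $b_i\neq 0$. The inequality $a_{i-1}\leq a_i=a_{i-1}+b_i$ is precisely the statement that $a_{i-1}$ is independent from $b_i$, and this is the step I expect to be the main obstacle. I would deduce it from Lemma \ref{lemma4}: by symmetry of independence, the element $a$ (because $b_i\in A^a$) and the elements $b_1,\dots,b_{i-1}$ (by pairwise independence) all lie in the set $A^{b_i}$ of elements independent from $b_i$; since $A^{b_i}$ is a subgroup, their sum $a_{i-1}=a+b_1+\cdots+b_{i-1}$ lies in $A^{b_i}$ as well, giving $a_{i-1}\leq a_i$. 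Consecutive strict inequalities, together with transitivity and antisymmetry, then force all the $a_i$ to be distinct and hence yield the full chain.

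Finally, I would check that the two maps are mutually inverse by direct telescoping. Starting from a chain and reconstructing, $a+\sum_{k=1}^{i}(a_{k-1}+a_k)=a_i$ because every interior term appears twice and cancels over $\Z/2\Z$; starting from a sequence and reconstructing, $a_{i-1}+a_i=b_i$ immediately. I expect the only genuinely non-formal point in the whole argument to be the backward well-definedness, which rests entirely on the subgroup property of $A^{b_i}$ established in Lemma \ref{lemma4}.
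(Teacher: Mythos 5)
Your proof is correct and follows essentially the same route as the paper: the same decomposition into well-definedness of the two maps plus a telescoping check, and your backward direction (summing elements of the subgroup $A^{b_i}$ via Lemma~\ref{lemma4}) is exactly the paper's argument. The only cosmetic difference is in the forward direction, where the paper deduces pairwise independence of the $b_i$ from the tail-stability clause of Lemma~\ref{lemma4}, while you get it directly from $b_i\leq a_{j-1}$, $b_i\leq a_j$ and Condition $(\ast\ast)$; both are valid.
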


\begin{proof}  Consider a  chain  \eqref{chain} in $A$ and set $b_i=a_{i-1} +a_{i}  \in A$ for   $i=1, \ldots, n$. We claim that $b_1,..., b_n$ are pairwise independent nonzero vectors of $A^a$.
  That $b_i \neq 0$ for all~$i$ follows from the assumption $a_{i-1}<a_i$ which implies that $a_{i-1} \neq a_i$. Since   $a_{i-1} \leq a_i$, Condition $(\ast)$   in the definition of an additive poset implies that   $$b_i=a_{i-1} +a_{i} \leq a_i= a_{i-1}+b_i. $$
  Thus,  $b_i \leq a_i$ and    $b_i$  is independent from $a_{i-1}$  for all $i\geq 1$. Now, for  any $j=1, \ldots, i-1$, we have $a_j\leq  a_{i-1}$.    Since $b_i$  is independent from $a_{i-1}$,  Lemma~\ref{lemma4}  implies that
  $b_i$ is independent from $a_j$. Since $b_j \leq a_j$, Lemma~\ref{lemma4} implies that $b_i$   is independent from $b_j$. Note also that $b_i$ is independent from $a_0=a$ so that $b_i \in A^a$. This proves our  claim above.

  Conversely, consider a sequence  $b_1,..., b_n$ of   pairwise independent nonzero vectors in $A^a$.  Set $a_0=a$ and  $a_i =a+ b_1 + \ldots   + b_i $
   for   $i=1,..., n$.  This yields    a chain \eqref{chain} in~$A$.
  Indeed, by the assumptions, the vectors $a, b_1, ..., b_{i-1}$ are independent from $b_i$ for all $i$, and therefore, by Lemma~\ref{lemma4},    their sum  $a_{i-1}$ is independent from $b_i$. Thus,
  $a_{i-1} \leq a_{i-1} + b_i=a_i $. Also,  $a_{i-1} \neq a_i$   because  $b_i \neq 0$.
It is clear that the  correspondences above are mutually inverse.
\end{proof}

As an exercise, the reader may prove that for any independent elements $a,b$ of an additive poset~$A$, we have
  $A_{a+b} \supset A_{a} + A_b $ and $A^{a+b}=A^a \cap A^b$.

\section{Atoms and tiles}

\subsection{Atoms}  One says that
  an element~$a$ of a poset~$P$ \emph{covers} an element $b \in  P$ if $b<a$ and there is no $c \in P$ such that $b<c<a$.
 An element  of an additive poset  is an \emph{atom}  if it covers $0$.   In other words, an element    $a $ of an additive poset    is  an atom if  $a \neq 0$ and its tail  consists solely of~$0$ and~$a$. For example, all nonzero vectors in an additive poset with trivial partial order
  are atoms. 

  \begin{lemma}\label{lemma7}  An  element $a$ of an additive poset $A$ covers an element $b \in A$ if and only if  $a+b$  is an atom   and $a+b \leq a$.  \end{lemma}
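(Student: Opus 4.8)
The plan is to set $d=a+b$ and reduce the whole equivalence to a single combinatorial bijection. First I would record the elementary fact that $b\leq a \Leftrightarrow d\leq a$: if $b\leq a$, then $d=a+b\leq a$ by Condition $(\ast)$ applied to $b\leq a$ and $a\leq a$; conversely, if $d\leq a$, then $b=a+d\leq a$ by the same condition, using $a+a=0$ from Lemma \ref{lemma1}. Hence in both directions of the proof the hypothesis $b\leq a$ is available (it is built into the assumption that $a$ covers $b$, and it follows from $d\leq a$ in the converse). Note also that $d\neq 0$ is equivalent to $a\neq b$.

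The core of the argument is the claim that translation by $b$ restricts to a bijection $\varphi\colon A_d \to I$, where $A_d=\{y\in A\mid y\leq d\}$ is the tail of $d$ and $I=\{c\in A\mid b\leq c\leq a\}$ is the closed interval from $b$ to $a$; here $\varphi(y)=y+b$, and its inverse is again $x\mapsto x+b$ since adding $b$ twice is the identity by Lemma \ref{lemma1}. Granting this, the lemma follows with no further work about the order, because both notions involved concern only the emptiness of open intervals: $a$ covers $b$ means $b\neq a$ and $I=\{b,a\}$, while $d$ being an atom means $d\neq 0$ and $A_d=\{0,d\}$. Since $\varphi$ carries $0\mapsto b$ and $d\mapsto a$ (indeed $d+b=a$), it maps the two-element set $\{0,d\}$ onto $\{b,a\}$ and therefore maps $A_d\setminus\{0,d\}$ bijectively onto $I\setminus\{b,a\}$; so one open interval is empty exactly when the other is. Combining this with $d\neq 0 \Leftrightarrow a\neq b$ and $b\leq a\Leftrightarrow d\leq a$ yields precisely \lq\lq $a$ covers $b$'' $\Leftrightarrow$ \lq\lq $d$ is an atom and $d\leq a$''.

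It remains to establish the bijection, i.e. the two well-definedness statements, and this is where the real content lies. For the inclusion $\varphi(A_d)\subseteq I$ I would take $y\leq d$ and prove $b\leq y+b\leq a$: the upper bound is immediate from $(\ast)$, since $y\leq d\leq a$ and $b\leq a$ give $y+b\leq a$; for the lower bound the device is to apply Condition $(\ast\ast)$ to the relations $y\leq a$ and $y\leq d$, which yields $y\leq y+a+d$, and since $a+d=b$ by Lemma \ref{lemma1} this reads $y\leq y+b$, so $y$ is independent from $b$ and hence $b\leq b+y=y+b$. The reverse inclusion $\psi(I)\subseteq A_d$ with $\psi(x)=x+b$, namely $x+b\leq d$ for every $x$ with $b\leq x\leq a$, is the main obstacle. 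Writing $y=x+b$, I would first note $y\leq x$ by $(\ast)$ (from $b\leq x$) and $y\leq a$ by $(\ast)$ (from $x\leq a$ and $b\leq a$), and then apply $(\ast\ast)$ to $y\leq a$ and $y\leq x$ to get $y\leq y+a+x$; since $y+a+x=(x+b)+a+x=a+b=d$ by Lemma \ref{lemma1}, this gives exactly $x+b\leq d$. Together with the fact that $\varphi$ and $\psi$ are mutually inverse group bijections, these two inclusions produce the desired set bijection. The idea I would emphasize is the repeated use of $(\ast\ast)$ in tandem with the order-two identities to collapse two upper bounds into the single bound needed, which is precisely what makes $\varphi$ and $\psi$ well-defined.
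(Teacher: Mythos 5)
Your proof is correct, and it takes a genuinely different route from the paper's. The paper handles the two implications separately: for ``atom $\Rightarrow$ covers'' it invokes the chain--independence correspondence of Lemma \ref{lemma--5} applied to a hypothetical chain $b<c<a$ to manufacture a nonzero element strictly below $a+b$, and for ``covers $\Rightarrow$ atom'' it runs a separate ad hoc argument turning a hypothetical $0<c<a+b$ into $b<b+c<a$. You instead prove a single structural fact --- that whenever $b\leq a$, translation by $b$ restricts to a mutually inverse pair of maps between the tail $A_{a+b}$ and the interval $\{c\mid b\leq c\leq a\}$, carrying $\{0,a+b\}$ onto $\{b,a\}$ --- and read off both implications at once from the emptiness of the corresponding punctured sets. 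All the individual steps check out: the reduction $b\leq a\Leftrightarrow a+b\leq a$ is a correct use of $(\ast)$ and Lemma \ref{lemma1}; each well-definedness inclusion is a legitimate application of $(\ast)$, then $(\ast\ast)$, then the order-two identities; and the lower bound $b\leq y+b$ correctly passes through the symmetry of independence established in the paper. What your approach buys is symmetry and self-containment (no appeal to Lemma \ref{lemma--5}), plus strictly more information than the lemma asserts: your bijection of the whole interval $[b,a]$ with the whole tail $A_{a+b}$ is essentially the correspondence $c\mapsto a+c$ that the paper only records informally in the discussion of Hasse diagrams following Lemma \ref{lemma7}. What the paper's route buys is reuse of machinery already developed for chains, which it exploits again immediately afterwards (e.g.\ in the proof of Theorem \ref{thm4}).
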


\begin{proof} Suppose   that   $a+b$  is an atom   and $a+b \leq a$. Then $a+b \neq 0$ and so $a \neq b$.  By Condition $(\ast)$, the relations $a+b \leq a$ and $a  \leq a$  imply that $b\leq a$. Thus $b<a$.  
If there is    $c \in A$ such that $b<c<a$,  then applying  Lemma~\ref{lemma--5} to the chain  $  b <c< a$ we obtain  that   $b+c$ and $c+a$ are independent nonzero vectors.
Hence $$0< b+c < (b+c) +(c+a) =a+b.$$  This contradicts the assumption that $a+b $ is an atom.  Thus,  $a$ covers $b$.

Conversely, suppose   that $a$ covers $b$. Then $b<a$ and, in particular,  $ a\neq b$. Therefore  $a+b \neq 0$.  By Condition $(\ast)$, the relations     $a\leq a$ and $b\leq a$
imply that $a+b \leq a$. It remains to prove that $a+b$ is  an atom. If   not, then there is    $c \in A$ such that $0<c<a+b$.  Combining with   $ a+b \leq a$, we obtain that $c<a$. Using  Condition $(\ast)$ and the relations  $b<a$ and  $c<a$, we obtain  that $b+c \leq a$. Since $ c\neq a+b$, we have $b+c \neq a$. Thus $b+c<a$.  Next, using Condition $(\ast \ast)$, we deduce from  $c<a+b$ and  $c<a$  that $c \leq b+c$. Thus, $b$ and $c$ are independent and consequently,   $b \leq b+c$. Since $c \neq 0$, we have $b \neq b+c$. Thus, $b<b+c <a$. This contradicts the assumption that~$a$ covers~$b$.
Hence,   $a+ b$ is an atom.
\end{proof}

 Lemma \ref{lemma7} may be rephrased in terms of the Hasse diagrams. The   Hasse diagram of a poset~$P$  is a directed graph with the set of vertices~$P$  which has an edge directed from $a  \in P$ to $b\in P$      if and only if~$a $ covers~$b$. By the Hasse diagram of an additive poset  we mean the Hasse diagram of the underlying poset.   Lemma~\ref{lemma7} implies that   the elements of an additive poset $A$ covered by a vector $a\in A$ bijectively correspond to the atoms  of the additive  poset $A_a\subset A$.
The correspondence carries any $b\in A$ covered by $a$ to   $a+b   \in A_a$.    The inverse correspondence carries   an atom  $c\in A_a$ to $a+c $. Similarly, the elements of~$A$ covering a vector  $b\in A$ bijectively correspond to  the atoms  of   $A^b\subset A$.  The correspondence carries any $a\in A$ covering~$b$ to   $a+b   \in A^b$.    The inverse correspondence carries   an atom  $c\in A^b$ to $b+c $.

\subsection{Tiles}     An element   $a $ of an additive poset $  A$  is  a  \emph{tile} if $a \neq 0$ and any two distinct atoms  belonging to  $A_a \subset A$  are independent.  Clearly, all atoms   are tiles.  All nonzero elements of the tail of a tile are tiles.


\begin{theor}\label{tiltsy}  Let $a$ be a tile in an additive poset $A$.   Let $I$   be the set of all     atoms of   $A_a$.   The additive  subposet   of $A_a$ additively generated by~$I$    is isomorphic to   $2^I_f$. \end{theor}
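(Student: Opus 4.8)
The plan is to build an explicit group isomorphism from linear independence and then check that it respects the two orders. Since $a$ is a tile, the set $I$ consists of pairwise independent atoms of $A_a$, so Lemma~\ref{lemma--4} shows that $I$ is linearly independent over $\Z/2\Z$. Thus $I$ is a basis of the subspace $V\subset A_a$ it spans, and the assignment $X\mapsto \sum_{i\in X}i$ defines a group isomorphism $\Phi\colon 2^I_f\to V$. The theorem then reduces to the single claim that for finite sets $X,Y\subset I$ we have $\Phi(X)\leq\Phi(Y)$ if and only if $X\subset Y$; this is exactly the assertion that $\Phi$ and $\Phi^{-1}$ are order-preserving.

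Everything hinges on one sublemma: if $X,Z\subset I$ are disjoint finite sets, then $\Phi(X)$ and $\Phi(Z)$ are independent. I would prove this by invoking the subgroup property of Lemma~\ref{lemma4} twice. First, fix a single atom $i\in X$; each $j\in Z$ is a distinct atom of $A_a$, hence independent from $i$ by the tile hypothesis, so $\Phi(Z)=\sum_{j\in Z}j$ lies in the subgroup $A^{i}$ and is therefore independent from $i$. Second, this shows that every $i\in X$ lies in the subgroup $A^{\Phi(Z)}$, whence $\Phi(X)=\sum_{i\in X}i$ lies in $A^{\Phi(Z)}$, i.e. $\Phi(X)$ and $\Phi(Z)$ are independent. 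The forward implication follows at once: if $X\subset Y$, write $Z=Y\setminus X$, so that $\Phi(X)$ and $\Phi(Z)$ are independent and $\Phi(X)\leq\Phi(X)+\Phi(Z)=\Phi(Y)$.

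For the converse, suppose $\Phi(X)\leq\Phi(Y)$ and take any $i_0\in X$. The forward implication applied to $\{i_0\}\subset X$ gives $i_0\leq\Phi(X)\leq\Phi(Y)$. If $i_0\notin Y$, the sublemma applied to the disjoint sets $\{i_0\}$ and $Y$ shows that $i_0$ is independent from $\Phi(Y)$. Here $\Phi(Y)\neq 0$: the set $Y$ must be nonempty, since $Y=\emptyset$ would give $\Phi(Y)=0$ and hence $i_0\leq 0$, forcing $i_0=0$; and a sum of distinct basis atoms is nonzero by linear independence of $I$. As $i_0\neq 0$ too, the incomparability of independent nonzero vectors established in Section~\ref{The independence relation and atoms} contradicts $i_0\leq\Phi(Y)$. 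Therefore $i_0\in Y$, and since $i_0\in X$ was arbitrary, $X\subset Y$. I expect this converse to be the main obstacle: it is precisely the step where the tile hypothesis is indispensable, since it is what guarantees that an atom lying outside $Y$ is independent from the entire sum $\Phi(Y)$, and it is where the incomparability of independent nonzero vectors is needed to force the containment.
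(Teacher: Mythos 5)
Your proposal is correct and follows essentially the same route as the paper: the map $X\mapsto\sum_{i\in X}i$, injectivity via Lemma~\ref{lemma--4}, independence of sums over disjoint sets via the subgroup property of Lemma~\ref{lemma4} to get $X\subset Y\Rightarrow\Phi(X)\leq\Phi(Y)$, and the converse by showing an atom $i_0\in X\setminus Y$ would be simultaneously comparable with and independent from $\Phi(Y)$. Your extra care in checking that $\Phi(Y)$ and $i_0$ are nonzero before invoking incomparability is a small point the paper leaves implicit, but it is the same argument.
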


\begin{proof}    For each  finite  set $J \subset I$, put $\varphi(J)=\sum_{j\in J} j  \in A_a$. 
This  defines an additive  homomorphism $\varphi: 2^I_f \to A_a$. It    is injective as directly follows from the assumption that $a$ is a tile and   Lemma~\ref{lemma--4}.     We need only to verify that~$\varphi$ carries the partial order in $2^I_f$ determined by the inclusion of sets  into the partial order in   $\varphi(2^I_f)$ induced by that of $A$.
Observe  that if   finite sets $J, L \subset I $ are disjoint, then all elements of $J$ are independent from   elements of $L$. By  the first claim of Lemma~\ref{lemma4}, the vectors $\varphi(J)$ and $ \varphi(L)  $   are independent from each other. Thus, $\varphi(J) \leq \varphi(J) + \varphi(L)= \varphi(J \cup  L)$.  Consequently,   for any  finite sets
$J\subset K \subset I $, we have $\varphi(J)  \leq \varphi(K)$.  Conversely, suppose that $\varphi(J)  \leq \varphi(K)$ for some finite  sets
$J, K \subset I $. For any $j_0\in J$, we have   $$   j _0\leq  \sum_{j\in J} j =\varphi(J)  \leq \varphi(K). $$ Therefore $ j_0 \leq \varphi(K)$.
If $j_0 \notin K$, then the vectors $ j_0$ and $\varphi(K)=\sum_{k \in K} k$ are independent and therefore incomparable.
This contradicts the relation  $ j_0\leq \varphi(K)$. Thus,  $j_0\in K$. This proves that $J\subset K$.
 \end{proof}

 \subsection{Examples}\label{exa23-}     For a  set $I$,   the atoms of the additive posets $2^I$ and $2^I_f$   are the one-element subsets of $I$.  All nonzero elements of $2^I$ and  $2^I_f $ are tiles.
 The  atoms of   $2^I_{ev}$   are the two-element subsets of~$I$.
 The additive poset $2^I_{ev}$    has no tiles other than its atoms.

   \subsection{Remark}\label{isi}   For  a finite set~$I$,   the  additive poset   $2^I_{ev}$ is isomorphic to an additive powerset if and only if $\vert I\vert \leq 2$.  Indeed, if $ I =\emptyset$ or $\vert I \vert =1$, then        $2^I_{ev}=\{0\}= 2^\emptyset$. If $\vert I \vert =2$, then     $2^I_{ev}=2^J$ where $J$ is a one-element set. If $\vert I \vert  =3$ or, more generally, if $\vert I \vert \geq 3$ is odd, then $2^I_{ev}$ does not have a greatest element while any additive powerset
  has a greatest element. If   $\vert I \vert  \geq 4$, then $2^I_{ev}$ has  nonzero  elements that are not tiles while all nonzero   elements of a finite additive powerset  are tiles.

\section{Finite additive posets}\label{Height and the independence relation}

An additive poset is    \emph{finite} if its underlying set is finite. In this section, we discuss  properties of   finite additive posets.

\subsection{Atoms as generators}  \label{Atoms as generators}        The following theorem shows       that every element of a finite additive poset   expands in a canonical way as a sum of  the atoms.

\begin{theor}\label{thm3}   Let $A $ be a finite additive poset and let $\mu: A \times A \to \Z$ be the M\"obius function of the partial order  in~$A$.    Then    any    $a\in  A$ expands    $a=\sum_{b\in {\mathcal A} } \mu(b,a) \, b$ where ${\mathcal A} \subset A$ is  the set of   atoms of~$A$. \end{theor}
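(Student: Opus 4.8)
The plan is to deduce the formula from classical Möbius inversion on the finite poset underlying $A$, after reformulating the claim as a statement about sums over tails. The key observation is that although the right-hand side involves only atoms, it is cleaner to first understand the sum of \emph{all} elements of a tail. Define $\phi : A \to A$ by $\phi(a)=a$ if $a$ is an atom and $\phi(a)=0$ otherwise (in particular $\phi(0)=0$). I will show that for every $a \in A$,
\begin{equation}\label{tailsum} \sum_{b \in A_a} b = \phi(a), \end{equation}
where $A_a=\{b\in A \mid b\leq a\}$ is the tail of $a$ and the sum is taken in the $\Z/2\Z$-vector space $A$.

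To prove \eqref{tailsum}, I would use that $A_a$ is a subgroup of $A$ and hence, by Lemma~\ref{lemma1}, a finite-dimensional $\Z/2\Z$-vector space, together with the elementary fact that the sum of all vectors of a finite $\Z/2\Z$-vector space $V$ equals $0$ when $\dim V \geq 2$ and equals the nonzero generator when $\dim V = 1$. Indeed, fixing a basis, each coordinate functional takes the value $1$ on exactly $2^{\dim V -1}$ vectors of $V$, a number that is even once $\dim V \geq 2$. Splitting into cases according to $\dim A_a$ then yields \eqref{tailsum}: if $\dim A_a=0$ then $a=0$ and both sides vanish; if $\dim A_a=1$ then $A_a=\{0,a\}$, so $a$ is an atom and both sides equal $a$; and if $\dim A_a \geq 2$ then $a$ is neither $0$ nor an atom, so both sides vanish.

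With \eqref{tailsum} in hand, the proof is completed by Möbius inversion. Reading \eqref{tailsum} as $\phi(a)=\sum_{b\leq a}\id(b)$, where $\id$ is the $A$-valued identity function, the inversion formula gives $\id(a)=\sum_{b\leq a}\mu(b,a)\,\phi(b)$, that is $a=\sum_{b\leq a}\mu(b,a)\,\phi(b)$. Since $\phi(b)=0$ unless $b$ is an atom, and since $\mu(b,a)=0$ whenever $b\not\leq a$, the right-hand side collapses to $\sum_{b\in\mathcal{A}}\mu(b,a)\,b$, which is the desired expansion.

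The step I expect to require the most care is the bookkeeping in the inversion: keeping the argument order in $\mu(b,a)$ correct and remembering that the integer coefficients $\mu(b,a)$ are reduced modulo $2$ when multiplied into the $\Z/2\Z$-vector space $A$. I regard the only real subtlety (rather than a genuine obstacle) to be verifying identity \eqref{tailsum} uniformly across the three cases and confirming that no tile hypothesis on $a$ is needed; the argument uses solely the $\Z/2\Z$-subspace structure of $A_a$, so it applies to an arbitrary element $a\in A$.
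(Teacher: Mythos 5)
Your proposal is correct and follows essentially the same route as the paper: both apply M\"obius inversion to the identity map $\id:A\to A$ after computing that $\sum_{b\in A_a}b$ equals $a$ when $a$ is an atom and $0$ otherwise. The only (immaterial) difference is in justifying the vanishing when $\dim A_a\geq 2$ — you count coordinates, the paper invokes invariance under automorphisms of $A_a$.
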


\begin{proof} The  function $\mu$ is uniquely characterized by the following properties:           $\mu(a,b)=0$ for all   $a,b\in A$ such that $a \not \leq b$ and $\mu(a,a)=1$ for all $a \in A$;  $\mu(a,b)=-\sum_{a\leq c < b} \mu(a,c) $ for all   $a,b\in A$ with $a< b$.
Given a   map $f$ from $A$ to an abelian group~$B$, one defines a  map ${f_\bullet}:A \to B$ by
$$ {f_\bullet}(a)=\sum_{b\leq a} f(b)\quad \text{for all} \quad a\in A. $$
 The   M\"obius inversion formula says that
\begin{equation} \label{mobiu}f(a)= \sum_{b\in A}  \, \mu(b,a ) \,  {f_\bullet}(b) \quad \text{for all} \quad a\in A.\end{equation}
We apply these formulas  to the identity map $f=\id:A\to A$. The map ${\id_\bullet}$ carries any $a\in A$ to   $\sum_{b\in A_a} b$. If $a$ is  an atom, then ${\id_\bullet}(a) =a$. If~$a$ is  not an atom, then ${\id_\bullet}(a)=0$ because the vector  $\sum_{b\in A_a} b  \in A_a$ is  invariant under all   automorphisms of    the $\Z/2\Z$-vector space $A_a$ and $\dim_{\Z/2\Z} A_a \geq 2$. Now,  Formula  \eqref{mobiu} directly implies the  claim of the theorem.
\end{proof}

  \begin{corol}\label{corolla2}   Any finite additive poset  is additively generated      by its atoms.       \end{corol}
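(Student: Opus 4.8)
The plan is to derive Corollary~\ref{corolla2} directly from Theorem~\ref{thm3}, which is the substantive result; the corollary is essentially an immediate consequence.

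First I would recall the statement of Theorem~\ref{thm3}: for a finite additive poset~$A$ with M\"obius function $\mu$, every vector $a\in A$ can be written as
\begin{equation*}
 a=\sum_{b\in {\mathcal A}} \mu(b,a)\, b,
\end{equation*}
where ${\mathcal A}$ is the set of atoms of~$A$. The key observation is that this expression already exhibits $a$ as a $\Z/2\Z$-linear combination of atoms. Since all elements of~$A$ have order two (Lemma~\ref{lemma1}), the integer coefficients $\mu(b,a)$ are only relevant modulo~$2$: each term $\mu(b,a)\,b$ equals either $0$ or $b$ in the additive poset, according to the parity of $\mu(b,a)$. Thus $a$ lies in the subgroup (equivalently, $\Z/2\Z$-subspace) of~$A$ generated by~${\mathcal A}$.

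The argument is therefore short: I would simply note that because $a$ was an arbitrary element of~$A$, Theorem~\ref{thm3} shows that every element of~$A$ is a $\Z/2\Z$-linear combination of atoms, so the atoms generate~$A$ as a $\Z/2\Z$-vector space. No further verification is needed, since the theorem supplies an explicit expansion for each vector.

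There is no real obstacle here; the only minor point to keep clean is the passage from the integer-valued M\"obius coefficients to the $\Z/2\Z$-structure, which is handled by Lemma~\ref{lemma1} (every element has order two). The entire content of the corollary has already been done in proving the theorem, so this step functions as a direct reading-off of the conclusion rather than an independent argument.
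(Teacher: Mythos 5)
Your proof is correct and matches the paper's intent exactly: the corollary is stated without a separate proof precisely because it reads off immediately from the expansion in Theorem~\ref{thm3}, with the integer M\"obius coefficients interpreted modulo~$2$ via Lemma~\ref{lemma1}. Nothing is missing.
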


The following theorem yields a different kind of expansions of   elements of a finite additive poset     as   sums of  atoms.

\begin{theor}\label{thm4}  Every  nonzero element  $a$ of a finite additive poset $ A$  expands as a sum of pairwise independent atoms of~$A$.  The atoms in  any such   expansion of~$a$ belong to the tail of~$a$. \end{theor}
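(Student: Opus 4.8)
The plan is to prove the existence of the expansion by induction on the number of atoms in the tail $A_a$, and then derive the containment claim from Lemma~\ref{lemma7}. First I would handle existence. Since $a\neq 0$, the tail $A_a$ contains at least one atom (by finiteness, a minimal nonzero element of $A_a$ is an atom of $A$ lying below $a$). Pick any atom $c\le a$ and set $a'=a+c$. By Lemma~\ref{lemma7}, since $c=a+a'$ is an atom with $c\le a$, the element $a$ covers $a'$, so in particular $a'<a$ and hence $A_{a'}\subsetneq A_a$. This strict containment is what drives the induction: the set of atoms in $A_{a'}$ is a proper subset of the set of atoms in $A_a$, so by the inductive hypothesis $a'$ expands as a sum of pairwise independent atoms, all lying in $A_{a'}\subset A_a$.

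The main obstacle will be the inductive step: I must show that $c$ can be adjoined to the expansion $a'=c_1+\cdots+c_k$ of $a'$ so that $c,c_1,\dots,c_k$ are \emph{pairwise} independent. The $c_i$ are already pairwise independent by hypothesis, so I only need $c$ to be independent from each $c_i$. The key geometric fact is that each $c_i$ lies in $A_{a'}=A_{a+c}$, i.e., $c_i\le a+c$. I would aim to show that any atom $d\le a$ distinct from $c$ is automatically independent from $c$, and then argue that each $c_i$ is such an atom (or deduce independence directly from $c_i\le a+c$ together with $c\le a$). Concretely: since $c$ is an atom below $a$ and $c_i\le a$ as well, both $c$ and $c_i$ are atoms in the tail $A_a$; \emph{if $a$ were a tile} independence would be immediate, but $a$ need not be a tile here, so I cannot invoke that. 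Instead I would use Condition $(\ast)$ and $(\ast\ast)$ directly: from $c_i\le a+c$ and $c\le a+c$ (the latter because $c\le a\le a+c$, using $(\ast)$ or Lemma~\ref{lemma1}) I can try to produce $c\le c+c_i$ via $(\ast\ast)$, which is exactly the independence of $c$ and $c_i$. This is the delicate computation; I expect it to reduce to applying $(\ast\ast)$ to the pair of relations $c\le a+c$, $c_i\le a+c$ after rewriting, so that the independence of $c$ from $c_i$ follows from the structure of $A_{a+c}$ as established in Lemma~\ref{lemma4}.

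For the containment claim, suppose $a=d_1+\cdots+d_m$ is \emph{any} expansion of $a$ into pairwise independent atoms. I want each $d_j\le a$, i.e., $d_j\in A_a$. Since the $d_j$ are pairwise independent, by Lemma~\ref{lemma4} the partial sum of the others, $a+d_j=\sum_{i\neq j} d_i$, is independent from $d_j$; equivalently $d_j\le d_j+(a+d_j)=a$. This uses precisely the characterization of independence ($d_j\le d_j+(a+d_j)$) together with Lemma~\ref{lemma4}, which guarantees that the sum $\sum_{i\neq j}d_i$ of atoms each independent from $d_j$ is again independent from $d_j$. Hence every atom appearing in the expansion lies in the tail of $a$, as claimed.

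The base case of the induction is $a$ an atom itself, where the expansion is the single term $a$ and the containment is trivial. I would organize the writeup so that the existence induction and the (independent, short) containment argument are clearly separated, since the containment direction does not require the inductive construction at all and is cleanest proved on its own via Lemma~\ref{lemma4}.
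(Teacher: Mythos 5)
Your overall plan is sound and your containment argument is exactly the paper's, but the one concrete computation you propose for the delicate step is wrong: the chain $c\leq a\leq a+c$ fails, because $a\leq a+c$ is by definition the statement that $a$ and $c$ are independent, and independent nonzero vectors are incomparable, contradicting $0\neq c\leq a$. So you cannot get independence of $c$ from $c_i$ by applying $(\ast\ast)$ to $c\leq a+c$ and $c_i\leq a+c$; the first relation is false. The correct route is the one you name in your last clause but do not execute: observe that $c$ is independent from $a'=a+c$ \emph{directly from the definition}, since $c+(a+c)=a$ and $c\leq a$ means precisely $c\leq c+a'$. Hence $a'\in A^c$, and the last claim of Lemma~\ref{lemma4} (the tail of any element of $A^c$ lies in $A^c$) puts every $c_i\in A_{a'}$ into $A^c$, i.e., makes each $c_i$ independent from $c$. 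With that substitution your induction closes: the atom $c$ does not lie in $A_{a'}$ (else $c\leq a+c$, again forcing independence of comparable nonzero elements), so the number of atoms in the tail strictly drops and the induction is well-founded.

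For comparison, the paper avoids this top-down peeling altogether: since $A$ is finite, every $a$ is the top of a saturated chain starting at $0$, and by Lemma~\ref{lemma7} combined with the bijection of Lemma~\ref{lemma--5}, saturated chains from $0$ to $a$ correspond exactly to sequences of pairwise independent atoms summing to $a$, so existence is immediate with no separate independence verification. Your induction is essentially the same construction run from the top down (each step $a\mapsto a+c$ with $c$ an atom below $a$ is, by Lemma~\ref{lemma7}, one covering step of such a chain), at the cost of having to re-derive the pairwise independence by hand via Lemma~\ref{lemma4}. The second claim is proved identically in both: for $a=d_1+\cdots+d_m$, Lemma~\ref{lemma4} makes $\sum_{i\neq j}d_i$ independent from $d_j$, whence $d_j\leq d_j+\sum_{i\neq j}d_i=a$.
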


\begin{proof}     One says that a chain  $ a_0< a_1< \cdots <a_n$  in a poset is   \emph{saturated}  if $a_i$ covers $a_{i-1}$ for  $i=1,..., n$. Lemma~\ref{lemma7} implies that under the bijective correspondence of Lemma~\ref{lemma--5},    saturated chains in $A$ starting with $a_0=0$  correspond to sequences of
   pairwise independent atoms $b_1, \ldots, b_n$ in $A^0=A$.  The maximal element $a_n$ of the chain is computed by $a_n= b_1 + \cdots +b_n$.  Since $A$ is finite, for any $a\in A$ there is a saturated chain with maximal element  $a$. For the corresponding  pairwise independent  atoms $b_1, \ldots, b_n$, we have  $a= b_1 + \cdots +b_n$.

   To prove the second claim of the theorem consider an expansion of $a$ as a sum   of pairwise independent atoms $a_1,..., a_n \in A$ with $n\geq 1$.   If $n=1$, then $a_1=a\leq a$. If $n\geq 2$, then Lemma \ref{lemma4} implies that   $a_1$ is independent from   $b=a_2+ \cdots  + a_n$ so that  $a_1\leq a_1 +b=a$.  Similarly,  $a_i\leq a$ for all $i=1,..., n$.
 \end{proof}

 \subsection{Remarks}\label{remsheight} 1.  For any atom $a\in A$,   the expansions of~$a$ in both   Theorems~\ref{thm3} and~\ref{thm4} are just   $a=a$.

 2. The  expansion in Theorem \ref{thm4} is  not necessarily unique.  For instance, given four   distinct  elements $i,j,k,l$  of a set $I$, the element $\{i,j,k,l\}$ of the additive poset $2^I_{ev}$  expands  as a sum of pairwise independent atoms
in three   ways:
$$ \{i,j,k,l\}=\{i,j\} + \{k,l\}=\{i,k\} + \{j,l\}=\{i,l\} + \{j,k\} .$$


\subsection{Tiles re-examined}  We discuss the tiles of   finite additive posets.

 \begin{theor}\label{tilestiles} Let $A$ be a finite additive poset. The following conditions on a nonzero vector $a \in A$ are equivalent:

  (i) $a$ is a tile;

  (ii) the tail  $A_a$ of $a$ is isomorphic to  an additive powerset;

  (iii)   all atoms of    $A_a$ are pairwise independent and their sum is equal to~$a$;

  (iv) the expansion of $a$ as   a sum   of pairwise independent atoms of $A$  is unique up to permutation  of the  summands.
  \end{theor}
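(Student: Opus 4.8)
The plan is to prove the cyclic chain of implications $(i)\Rightarrow(ii)\Rightarrow(iii)\Rightarrow(iv)\Rightarrow(i)$; since $a\neq 0$ is assumed throughout, each condition is meaningful. A preliminary observation I would record is that the atoms of the additive poset $A_a$ coincide with the atoms of $A$ lying in $A_a$: if $c\leq a$ is an atom of $A$, then $A_c=\{0,c\}\subseteq A_a$, so the tail of $c$ computed inside $A_a$ is again $\{0,c\}$, and conversely. Thus the definition of tile (phrased via atoms of $A$ in the tail of $a$) matches ``atoms of $A_a$'', and I may apply Theorem~\ref{tiltsy} to $A_a$ without ambiguity.

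For $(i)\Rightarrow(ii)$, let $I$ be the finite set of atoms of $A_a$. Theorem~\ref{tiltsy} gives that the subposet of $A_a$ generated by $I$ is isomorphic to $2^I_f$; since $A_a$ is finite, Corollary~\ref{corolla2} shows $A_a$ is generated by $I$, so this subposet is all of $A_a$, and as $I$ is finite $2^I_f=2^I$ is an additive powerset. For $(ii)\Rightarrow(iii)$, I would use that an isomorphism of additive posets preserves addition and order, hence independence, atoms, and the greatest element. In $2^I$ the atoms are the singletons, which are pairwise disjoint and therefore pairwise independent (Example~\ref{exa23----}), and their sum is the greatest element $I$. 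Transporting this through the isomorphism and noting that the greatest element of the tail $A_a$ is $a$ itself, I conclude that the atoms of $A_a$ are pairwise independent and sum to $a$.

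For $(iii)\Rightarrow(iv)$, write $I=\{c_1,\dots,c_m\}$ for the atoms of $A_a$; by hypothesis they are pairwise independent, hence linearly independent by Lemma~\ref{lemma--4}, and $c_1+\cdots+c_m=a$. By Theorem~\ref{thm4} the atoms occurring in any pairwise-independent expansion of $a$ lie in $A_a$, so any such expansion has the form $a=\sum_{c\in J}c$ for some $J\subseteq I$. Then $\sum_{c\in I\setminus J}c=a+a=0$, and linear independence forces $I\setminus J=\varnothing$, i.e. $J=I$; the expansion is thus unique up to permutation.

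The step I expect to be the main obstacle is $(iv)\Rightarrow(i)$, which I would establish contrapositively: assuming $a$ is \emph{not} a tile, I produce two distinct pairwise-independent atom expansions of $a$. Choose distinct atoms $c,d\in A_a$ that are not independent. Since $c\leq a$, Lemma~\ref{lemma7} shows that $a$ covers $a+c$ (indeed $a+(a+c)=c$ is an atom $\leq a$). Taking a saturated chain from $0$ to $a+c$ (available by finiteness, as in the proof of Theorem~\ref{thm4}) and appending the covering $a+c<a$, Lemma~\ref{lemma--5} yields an expansion $a=f_1+\cdots+f_p+c$ into pairwise independent atoms that \emph{contains} $c$; symmetrically there is a pairwise-independent expansion containing $d$. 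If these two expansions coincided as sets, then $c$ and $d$ would both belong to one set of pairwise independent atoms, making $c,d$ independent and contradicting our choice; hence they are distinct, contradicting $(iv)$. The crux of the whole argument is this chain-extension device, which forces a prescribed atom into an expansion and thereby converts the failure of independence of two atoms into genuine non-uniqueness.
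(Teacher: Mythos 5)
Your proposal is correct, and it follows the paper's proof almost exactly: the same cycle $(i)\Rightarrow(ii)\Rightarrow(iii)\Rightarrow(iv)\Rightarrow(i)$, with $(i)\Rightarrow(ii)$ via Theorem~\ref{tiltsy} and Corollary~\ref{corolla2}, $(ii)\Rightarrow(iii)$ by transporting singletons through the isomorphism with $2^I$, and $(iii)\Rightarrow(iv)$ by showing the complement of the atoms used in a given expansion sums to zero and invoking Lemma~\ref{lemma--4}. The only divergence is in $(iv)\Rightarrow(i)$: the paper argues directly, taking an arbitrary atom $c\in A_a$, noting $c$ is independent from $b=a+c$, expanding $b$ by Theorem~\ref{thm4}, and using Lemma~\ref{lemma4} to produce a pairwise-independent expansion of $a$ containing $c$, which by uniqueness forces $c$ to be one of the $a_i$; you instead argue by contraposition, forcing a prescribed atom into an expansion by extending a saturated chain through the covering $a+c<a$ (Lemmas~\ref{lemma7} and~\ref{lemma--5}) and deriving two distinct expansions from two non-independent atoms. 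Both versions turn on the same device --- an expansion of $a$ through a prescribed atom of $A_a$ --- so the difference is one of packaging rather than substance; your contrapositive phrasing arguably isolates more cleanly why failure of tileness produces non-uniqueness, at the cost of routing through the chain correspondence where the paper needs only Lemma~\ref{lemma4}. Your preliminary observation that atoms of $A_a$ are exactly the atoms of $A$ lying in $A_a$ is a worthwhile explicit check that the paper leaves implicit.
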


\begin{proof}  The implication $(i) \Rightarrow (ii)$   follows from Theorem \ref{tiltsy} and the fact that the atoms of $A_a$ additively generate $A_a$. Note also that the set~$I$ of atoms of $A_a$ is finite and so $2^I_f=2^I$.


We prove that $(ii) \Rightarrow (iii)$.  Under an identification of $A_a$ with the additive powerset   of  a finite set~$I$, the atoms of $A_a$ correspond to singletons while the vector  $a\in A_a$, being the maximal element of $A_a$,  corresponds to the set $I$ itself. It is clear that the  singletons are pairwise disjoint and their union is~$I$.  This proves (iii).


 We   prove that $(iii) \Rightarrow (iv)$.  Given an  expansion of $a$ as   a sum   of pairwise independent atoms  $a_1,..., a_k$ of $A$, we have  $a_1,..., a_k \in A_a$ by Theorem~\ref{thm4}.  We need only to show that all atoms of $A_a$ appear  among $a_1,..., a_k$.    Let $b_1,..., b_l$ be the atoms of $A_a$ not appearing among  $a_1,..., a_k$. By   assumption, the atoms  $a_1,..., a_k, b_1,..., b_l$ are pairwise independent and their sum is equal to~$a$. Since $a =a_1+ \cdots +  a_k$, we have  $b_1+\cdots + b_l=0$. By Lemma \ref{lemma--4}, we have $l=0$.

 We now prove that $(iv) \Rightarrow (i)$. Suppose   that $a$ has a  unique expansion  as a sum   of pairwise independent atoms $a_1,..., a_n$ of $ A$ with $n\geq 1$.   By Theorem~\ref{thm4},    $a_1,..., a_n \in A_a$. Pick   any atom $c\in A_a$. If $a=c$, then~$a$ is an atom  and $(i)$ is clear. Suppose that     $ a \neq c$ and set $b=a+c \neq 0$.
Since $c\leq a=c+b$, the atom~$c$ is independent from~$b $.  Pick an  expansion of~$b$ as a sum of pairwise independent atoms $ b_1,...,   b_m \in A_b$ with  $m\geq 1$.   Lemma \ref{lemma4} implies that   $c$ is independent from  $b_j $ for all~$j $. Thus, the atoms $c, b_1,..., b_m$ are pairwise independent and their sum is equal to $c+b=a$. By   assumption, the sets of atoms $\{a_1,..., a_n\}$
and $\{c, b_1,..., b_m\}$ must coincide. Thus, $c=a_i$ for some $i$.  Therefore,  all atoms of $A_a$ belong to the set $\{a_1,..., a_n\}$. This   implies  that $a$ is a tile. \end{proof}

  \subsection{Example}\label{exa23}
   Let $ a_0\in H\subset A$ be as in  Example~\ref{exam22}.3 and   $\dim(H)\geq 2$. Then $A_{a_0}=H$ and the  set
of  atoms of $A_{a_0}$ is  $\mathcal A= H \setminus \{a_0, 0\}$. The expansion of $a_0$  in   Theorem~\ref{thm3}
is $a_0= \sum_{a\in \mathcal A  } a$.     Theorem~\ref{thm4} yields an expansion of~$a_0$   as a sum of    two atoms: any   $b\in \mathcal A$ and $a_0+b \in \mathcal A$.  
The vector $a_0 $ is a tile iff $\dim(H)= 2$.

\section{Invariants of finite additive posets}


\subsection{Invariants}\label{finite additive posets}  We  consider four numerical     invariants of finite additive posets: the height,   the width,  the weight, and the dimension.
The first two   are defined for any finite poset~$P$.
The \emph{height} $h(P) $   is the maximal length of a chain in~$P$.  The \emph{width} $w(P)$   is the cardinality of a biggest antichain in~$P$ where   an antichain    is a subset  consisting of
pairwise incomparable elements.
The height $h(A)$ and   width  $w(A)$ of a finite additive poset~$A$ are   the height and width of the    underlying poset.  The \emph{weight} $wt(A)$ of   $A$ is the number of atoms of~$A$.   The \emph{dimension} $\dim(A)$ of~$A$ is the dimension of~$A$ as a $ \Z/2\Z$-vector space. Clearly, $\dim(A) =\log_2 \vert A \vert  $ where the vertical bars stand for the number of elements  of a   set.


\begin{theor}\label{heightind}   For any finite additive poset $A$, we have $$h(A) \leq \dim(A) \leq wt(A) \leq w(A).$$  \end{theor}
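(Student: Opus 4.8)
The plan is to establish the chain of three inequalities $h(A) \leq \dim(A) \leq wt(A) \leq w(A)$ separately, moving from left to right.

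For the first inequality $h(A) \leq \dim(A)$, I would invoke the correspondence from Lemma~\ref{lemma--5} together with Lemma~\ref{lemma--4}. A chain of maximal length $h(A)$ can be assumed to start at $0$ (since we may prepend $0$ if it is not already the minimum, and $0$ is the least element by Lemma~\ref{lemma1}); applying Lemma~\ref{lemma--5} with $a=0$ converts this chain into a sequence of $h(A)$ pairwise independent nonzero vectors $b_1,\dots,b_{h(A)}$ in $A^0=A$. By Lemma~\ref{lemma--4}, these vectors are linearly independent in the $\Z/2\Z$-vector space $A$, so their number cannot exceed $\dim(A)$. This gives $h(A) \leq \dim(A)$.

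For the second inequality $\dim(A) \leq wt(A)$, I would use Corollary~\ref{corolla2}, which states that the atoms generate $A$ as a $\Z/2\Z$-vector space. Since a generating set of a vector space has cardinality at least the dimension, and $wt(A)$ is by definition the number of atoms, we immediately obtain $\dim(A) \leq wt(A)$.

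For the third inequality $wt(A) \leq w(A)$, the key observation is that the set $\mathcal A$ of all atoms of $A$ is an antichain. Any two distinct atoms are nonzero and incomparable: if $a, c$ are distinct atoms with $a \leq c$, then $a \in A_c \setminus \{0, c\}$, contradicting that $c$ is an atom (whose tail contains only $0$ and $c$). Hence the atoms form an antichain of size $wt(A)$, and since $w(A)$ is the size of a largest antichain, we get $wt(A) \leq w(A)$. I expect the main subtlety to lie in the first inequality, specifically in justifying that a longest chain may be taken to begin at $0$ and that Lemma~\ref{lemma--5} applies verbatim; once the reduction to linearly independent vectors via Lemmas~\ref{lemma--5} and~\ref{lemma--4} is in place, the remaining two inequalities are essentially immediate consequences of the generation and incomparability properties of atoms.
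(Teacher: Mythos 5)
Your proposal is correct and follows essentially the same route as the paper: reduce a maximal chain (which may be taken to start at $0$) to pairwise independent nonzero vectors via Lemma~\ref{lemma--5}, apply Lemma~\ref{lemma--4} to get $h(A)\leq\dim(A)$, and then use that the atoms generate $A$ and form an antichain for the remaining two inequalities. The extra justification you supply for the incomparability of distinct atoms is a correct filling-in of a detail the paper leaves implicit.
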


\begin{proof}    By definition,   $n=h(A)$ is the maximal integer   such that    $A$ has a chain of length~$n$.
 Since $0\in A$ is the  least element of $A$,  a   chain of   length~$n$ in~$A$   must  start   with~$0$. Lemma \ref{lemma--5}  implies  that
   $n$ is the maximal integer such that  there are~$n$   pairwise independent nonzero vectors in $A^0=A$.
 By Lemma~\ref{lemma--4},   $n\leq \dim(A)$. The inequalities $\dim(A) \leq wt(A) \leq w(A)$ follow  from the fact that the atoms of~$A$ generate~$A$ as a vector space and  form an antichain.
 \end{proof}

 \subsection{Examples}\label{SSpS}  1. Let $I$ be a finite set with $n $ elements. Then $$h(2^I)=\dim(2^I)=wt(2^I)=n$$
and, by  Sperner's theorem (see \cite{En}),   $w(2^{I})={n \choose [n/2]}$.

2. By Theorem \ref{heightind}, if~$P$  is the underlying   poset of a finite additive poset, then
 $h(P) \leq \log_2 (\vert P \vert )  \leq w(P)$.  For example, for an  integer $n \geq 1$, the  set $\{0,1, \ldots, n\}$ with partial order  $0\leq 1 \leq \cdots \leq n$ has  the height $n$. Since   $n  \leq \log_2 (n+1 )  $  only for $n=1$,
   the poset $\{0,1, \ldots, n\}$  with   $n\geq 2$ does not underlie  an additive poset.


     \subsection{The weight function}  Let $A $  be a finite additive poset.  Any invariant $\psi$ of finite additive posets determines two functions on $A$   by
     $\psi(a)=\psi(A_a) $ and $ co-\psi (a)=\psi (A^a)$ for all $a\in A$.        In particular,    these definitions apply to the invariants   from  Section \ref{finite additive posets}.     We briefly discuss the functions associated with $\psi=wt$.

We define
    the \emph{weight} of    $a \in A$   by $wt(a)=wt(A_a)$. Thus, $wt (a)$ is the number of  atoms of~$A$ that are smaller than or equal to~$a$.   In other words,   $wt(a)$ is   the number of edges in the Hasse diagram  of $A$ directed   from $a$ to other vertices.  If $a,b\in A$ satisfy $a <b$, then  $wt(a) <wt(b)$. Indeed, since $A_a\subset A_b$, all atoms of $A_a$ are also atoms of $A_b$. By Corollary~\ref{corolla2}, $A_b$ is generated by its atoms.   Since $b\in A_b \setminus A_a$, we have $A_b\neq A_a$, and thus at least one    atom  of $A_b$ does not belong to $A_a$.


We  define the \emph{coweight} of $a\in A$  by $cowt(a)=wt(A^a)$. Thus, $cowt(a)$ is the number of atoms of~$A$ independent from~$a$ or, equivalently,    the number of edges of the Hasse diagram  of $A$  directed to~$a$. Since $A_a \cap A^a=\{0\}$, we have
 $wt(a)+ cowt(a) \leq wt(A)$.
 If $a,b\in A$ satisfy $a <b$, then  $cowt(a) > cowt(b)$. This is shown   as in the previous paragraph using that $A^a\supset  A^b$ and $a+b \in A^a\setminus A^b$.


\section{Plain additive posets}


We introduce   a class of     plain   additive posets.

\subsection{Embeddings}\label{Small additive posets}
 An  \emph{embedding} of an additive poset $A$ into an additive poset~$B$ is an isomorphism of~$A$ onto an additive subposet of~$B$. In other words, an embedding $\varphi : A \to B$ is a    group monomorphism  such that for any $a,b \in A$, the relation $a\leq b$  holds in $A$ if and only if  $\varphi (a) \leq \varphi(b)$   in $B$.
 We say that~$A$   \emph{embeds} in~$B$ if there is an embedding   $  A \to B$.

   We say that an additive poset $A$   is \emph{plain} if it embeds in the additive powerset $2^{I }$ for some  finite set~$I$.   Then $A$ is finite and
 $ \dim (A) \leq \dim(2^{I }) =\vert I\vert $.

 \subsection{Separating   functionals}\label{Separating sets of functionals}
It is useful to reformulate plainness     in terms of linear functionals.
By a linear functional on an additive poset~$A$, we mean a group homomorphism $A \to \Z/2\Z$.
A  linear functional  $f : A \to \Z/2\Z$ is \emph{order-preserving} if it is a morphism of additive posets $A \to (\Z/2\Z, \leq_t)$. In other words,  $f $ is  order-preserving  if for any $a,b\in A$ such that $a \leq b$, we have  $f(a) \leq_t f(b)$.

We say that a set $S \subset   A^*=\Hom (A, \Z/2\Z)$  is   \emph{separating} if
 all elements of $S$ are order-preserving   and   for any $a,b\in A$  with $a\not \leq b$, there is   $s\in S$ such that $s(a)=1$ and $s(b)=0$.  Taking here $b=0$ we obtain that  for any nonzero $a\in A$,    there is  $s\in S$ such that $s(a)=1$. In other words, $\cap_{s\in S} \Ker s=0$.

  \begin{theor}\label{simple} The following conditions on a finite additive poset $A$ are equivalent:

  (i) $A$ is plain;

  (ii) The set of  all order-preserving linear functionals on~$A$ is separating;

  (iii) There exists a separating subset of~$A^*$;

  (iv) The partial order in~$A$ is obtained   as in  Theorem~\ref{presd} from  a set $S\subset   A^*$ such that $\cap_{s\in S} \Ker s=0$.
  \end{theor}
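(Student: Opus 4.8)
The plan is to prove the cycle of implications $(i) \Rightarrow (ii) \Rightarrow (iii) \Rightarrow (iv) \Rightarrow (i)$, exploiting the fact that each condition is easier to obtain from the preceding one than to prove directly.

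First I would establish $(i) \Rightarrow (ii)$. Suppose $A$ is plain, so there is an embedding $\varphi : A \hookrightarrow 2^I$ for a finite set $I$. Identifying $2^I$ with maps $I \to \Z/2\Z$ as in Example~\ref{exam16}.2, each $i \in I$ yields an evaluation functional $\mathrm{ev}_i : 2^I \to \Z/2\Z$, and the partial order in $2^I$ is precisely $X \leq Y$ iff $\mathrm{ev}_i(X) \leq_t \mathrm{ev}_i(Y)$ for all $i$. Pulling back along $\varphi$, set $s_i = \mathrm{ev}_i \circ \varphi \in A^*$. Because $\varphi$ is an embedding (reflecting the order in both directions), the $s_i$ are order-preserving, and if $a \not\leq b$ in $A$ then $\varphi(a) \not\leq \varphi(b)$ in $2^I$, so some coordinate $i$ has $s_i(a) = 1$ and $s_i(b) = 0$. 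Thus $\{s_i\}$ is separating, and a fortiori the full set of all order-preserving functionals on $A$ is separating, giving (ii).

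The implication $(ii) \Rightarrow (iii)$ is immediate, since (ii) exhibits a concrete separating set. For $(iii) \Rightarrow (iv)$ I would take a separating set $S \subset A^*$; by the remark following the definition of separating, $\cap_{s \in S} \Ker s = 0$, so Theorem~\ref{presd} applies and produces an additive poset $(A, \preceq)$. The content here is to check that $\preceq$ coincides with the given order $\leq$ on $A$. The relation $a \leq b \Rightarrow a \preceq b$ holds because every $s \in S$ is order-preserving, so $s(a) \leq_t s(b)$ for all $s$. Conversely, if $a \not\leq b$, the separating property furnishes $s \in S$ with $s(a) = 1 > 0 = s(b)$, so $s(a) \not\leq_t s(b)$ and hence $a \not\preceq b$; contrapositively $a \preceq b \Rightarrow a \leq b$. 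Thus $\preceq\; =\; \leq$ and (iv) holds.

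The main work is $(iv) \Rightarrow (i)$, closing the loop. Assume the order on $A$ arises from $S \subset A^*$ with $\cap_{s \in S} \Ker s = 0$ as in Theorem~\ref{presd}. Since $A$ is finite (hence $A^*$ is finite), I may take $S$ to be finite; list its elements as $s_1, \dots, s_m$. Define $\varphi : A \to 2^{\{1,\dots,m\}}$ by sending $a$ to the map $j \mapsto s_j(a)$, i.e.\ to the subset $\{\, j : s_j(a) = 1 \,\}$. This is a group homomorphism because each $s_j$ is additive, and it is injective because $\cap_j \Ker s_j = 0$. Finally $\varphi$ reflects and preserves the order: $a \leq b$ in $A$ means $s_j(a) \leq_t s_j(b)$ for all $j$, which is exactly $\varphi(a) \subset \varphi(b)$ in $2^{\{1,\dots,m\}}$. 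Hence $\varphi$ is an embedding of $A$ into a finite additive powerset, so $A$ is plain, completing the cycle. The only delicate point I anticipate is being careful that the separating/Theorem~\ref{presd} hypotheses genuinely force $\preceq$ to agree with $\leq$ in both directions rather than merely refining it; once that equivalence is pinned down, the remaining steps are formal.
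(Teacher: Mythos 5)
Your proposal is correct and follows essentially the same route as the paper: the same cycle $(i)\Rightarrow(ii)\Rightarrow(iii)\Rightarrow(iv)\Rightarrow(i)$, with the same coordinate functionals $s_i\varphi$ for the first implication and the same map $a\mapsto\{s\in S : s(a)=1\}$ into a finite additive powerset for the last. The only cosmetic difference is that you index the target powerset by $\{1,\dots,m\}$ rather than by $S$ itself (finiteness of $S$ being automatic since $A^*$ is finite), and the "delicate point" you flag---that $\preceq$ equals $\leq$ in both directions---is handled exactly as you describe.
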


\begin{proof}   We  prove that $(i) \Rightarrow (ii)$. Suppose   that there is  an embedding $\varphi : A \to 2^{I }$, where $I $ is a finite set. For each $i\in I $, consider the  map $s_i:2^I \to \Z/2\Z$
which carries  a set $J \subset I$ to $ 1$ if $i\in J$ and to $ 0$ if $i \not \in J$.    It is clear  that $s_i$ is an order-preserving linear map.     Hence,  $s_i \varphi: A \to \Z/2\Z$ is an order-preserving linear functional.  To check (ii),   pick    any $a,b\in A$  with $a\not \leq b$. Then
 $\varphi(a)\not \leq \varphi(b)$, i.e., the set $\varphi(a) \subset I$ is not contained in  the set $\varphi(b)\subset I$. For  any  $i\in \varphi(a) \setminus \varphi(b)$, we have  $s_i\varphi(a)=1$ and $s_i\varphi(b)=0$.

The implication  $(ii) \Rightarrow (iii)$  is obvious.
We show that  $(iii) \Rightarrow (iv)$. Let $S \subset A^*$ be a  separating set. As we know,  $\cap_{s\in S} \Ker s=0$.
  We claim that the given partial order $\leq$ in~$A$ coincides with the partial order $\preceq $ in~$A$ determined by~$S$ as in Theorem~\ref{presd}. Indeed, pick any $a,b\in A$. If $a\leq b$, then $s(a) \leq_t s(b)$ for all $s\in S$ because all  $s\in S$ are order-preserving. By the definition of $\preceq $, we have $a\preceq b$. If $a \not \leq b$, then, since $S$ is separating,   there is $s\in S$ such that $s(a)=1$ and $s(b)=0$. By  the definition of $\preceq $, we have $a \not \preceq b$.
Therefore, $\leq \,  =\, \preceq $.


It remains to prove that $(iv) \Rightarrow (i)$.    Suppose   that the partial order   in~$A$ is obtained    as in  Theorem~\ref{presd} from a set $S\subset A^*$  such that $\cap_{s\in S} \Ker s=0$.  Consider the map $ \varphi: A \to 2^S$ carrying any $a\in A$ to     $$\varphi(a)=\{s\in S\, \vert \, s(a)=1\}\subset S.$$
 The   map $\varphi $  is an additive  homomorphism: for any $a,b\in A$,
$$\varphi(a+b)= \{s\in S\, \vert \, s(a+b)=1\}=  \{s\in S\, \vert \, s(a)+ s(b)=1\}$$
$$=\{s\in S\, \vert \, s(a)=1, s(b)= 0 \quad \text{or} \quad  s(a)=0, s(b)=1\}$$
$$=\{s\in S \, \vert \, s(a)=1\} + \{s\in S\, \vert \, s(b)=1\} = \varphi(a) + \varphi(b).$$
  The condition $\cap_{s\in S} \Ker s=0$ ensures that if $a \neq 0$, then $\varphi(a)\neq \emptyset$. Thus,  $\varphi$ is a monomorphism. Next, consider any $a,b\in A$. If $a\leq b$, then  $s(a) \leq_t s(b)$ for all $s\in S$. Thus, $s(a)=1 \Rightarrow s(b)=1$.
Consequently, $\varphi(a) \leq \varphi(b)$. If $a \not \leq b$, then there is  $s\in S$ such that $s(a)=1$ and $s(b)=0$.
Then  $s\in \varphi(a)$  and $s \not\in \varphi(b)$. Consequently, $\varphi(a) \not \leq \varphi(b)$. We conclude that  the map $\varphi:A\to 2^S$ is an embedding of additive posets.
 \end{proof}

%
%
%


\subsection{Complexity}
The \emph{complexity}  $c(A)$  of a plain additive poset  $A$ is the smallest integer $n \geq 0$ such that $A$ embeds in  the additive powerset  $2^{I }$ for an $n$-element set~$I$.
Note that all additive subposets of~$A$ are plain   and have complexity $  \leq c(A)$.
 If   $A$ is isomorphic to $2^I$ for a finite set~$I$, then $c(A) = \dim(A) =\vert I \vert  $. If   $A$ is not isomorphic to an additive powerset, then   $c(A) \geq \dim(A)+1  $.



 \begin{theor}\label{thm17}  For any plain additive poset $A$, we have $c(A)=\min_S \vert S\vert$
where $S$ runs over all separating subsets of $A^*$ and \begin{equation}\label{sperner}  {c(A) \choose [c(A)/2]} \geq w(A),\end{equation}
where $w(A)$ is the width of $A$.  \end{theor}

\begin{proof} An embedding  $A \to 2^{I}$
yields a separating subset  of $A^*$  consisting of  $\vert I \vert$ elements, see the   proof of the implication $(i) \Rightarrow (ii)$ in   Theorem~\ref{simple}. Hence  $\min_S \vert S\vert \leq c(A)$. Also, for any separating  set $S \subset A^*$, there is an embedding  $A\to 2^{S}$, see the   proof of the implications $(iii) \Rightarrow (iv) \Rightarrow (i)$ in  Theorem~\ref{simple}. Thus, $c(A)\leq \min_S \vert S\vert$.
To prove \eqref{sperner}, it is enough to observe that if $A$ embeds in $2^I$, then $w(A) \leq w(2^I)={\vert I \vert \choose [\vert I \vert/2]}$, cf. Example~\ref{SSpS}.1. \end{proof}





\subsection{Example}     We compute      $ c(2^I _{ev})$ for any finite set~$I$. If $I=\emptyset$ or $\vert I\vert =1$, then    $2^I_{ev}= 2^\emptyset$ and  $c(2^I_{ev})  =0$.
 If $\vert I \vert =2$, then
$2^I_{ev}= 2^K$, where $K$ is a one-element set, and   $c(2^I_{ev})  =1$. Assume  that $\vert  I \vert \geq 3$, pick   $i\in I$, and set $K=I \setminus  \{ i \}$. The  map
$2^I _{ev}\to 2^K, J \mapsto J \cap K$ is   a  group  isomorphism  and therefore 
  $$\dim(2^I _{ev})=\dim(2^K)=\vert K \vert=\vert I \vert -1.$$ Since $2^I_{ev}$ is an additive subposet of  $2^I$ and    $2^I_{ev}$ is not   isomorphic to an additive powerset  (Remark \ref{isi}), we have    $$  \vert I \vert \geq  c(2^I_{ev}) \geq \dim(2^I_{ev}) +1 = \vert I \vert. $$
  We   conclude that
  $c(2^I_{ev})=\vert I \vert $.




\section{Homology posets}\label{Additive posets in topology}



\subsection{Basics}\label{CW-complexes} We define the structure of an additive poset in    the  top-dimensional homology of any finite-dimensional CW-complex. We first recall   the   terminology, see \cite{LW}. A \emph{$k$-ball} $D^k $ with  $k= 1, 2, \ldots$ is a      ball  in Euclidean space $  \RR^k$.   To attach    $D^k $ to a topological space~$Y$ along a (continuous) map $\varphi:\partial D^{k}\to Y$, one takes the disjoint union $Y \amalg D^k$ and   identifies each point  of the sphere $\partial D^{k}=S^{k-1}$ with its image    under~$\varphi$. 
One   similarly attaches   families of  disjoint  $k$-balls to $Y$ along   maps of their boundary spheres to~ $ Y$.
For an  integer $n \geq 0$, an \emph{$n$-dimensional CW-complex} $X$  is a  Hausdorff   topological space endowed with a filtration $$X^{(0)} \subset X^{(1)} \subset \cdots   \subset X^{(n)} =X,$$
where $X^{(0)} $ is a discrete set of points and for $k=1, \ldots, n$, the space $X^{(k)} $ is obtained by attaching a family of $k$-balls to $X^{(k-1)} $.  One calls $X^{(k)} $   the \emph{$k$-skeleton} of~$X$.
The space  $X^{(k )} \setminus X^{(k-1)} $ is  a disjoint union of copies of $\Int(D^k)=D^k\setminus S^{k-1}$ called the \emph{(open) $k$-cells} of~$X$.
Let $I_k$ be the set of all   $k$-cells of~$X$. For  any cells $e\in I_k$ and $e'\in I_{k-1}$, one defines  a residue $[e :e'] \in \Z/2\Z$ as follows. 
Compose the attaching map  $ S^{k-1} \to X^{(k-1)}$ of~$e$ with the map $  X^{(k-1)} \to S^{k-1}$ obtained by collapsing $X^{(k-1)}\setminus e'$ to a point and identifying the result  with $S^{k-1}$. Then $[e:e']$ is the degree mod  2 of the composed  map $  S^{k-1} \to  S^{k-1} $.

The top   homology $ H=H_n(X; \Z/2\Z)$ of~$X$ is   the  subspace  of the  $\Z/2\Z$-vector space   $ 2^{I_n}$ consisting of   all sets $E\subset I_n$ such that $\sum_{e\in E} [e:e'] =0$
  for all $e'\in I_{n-1}$. Such sets~$E$ are called   \emph{(cellular) $n$-cycles}.
The   partial order in $ 2^{I_n}$ determined by the inclusion of  sets     restricts to a partial order   in~$H$  and turns~$H$
 into an additive poset. We call $H$  the \emph{homology poset} of $X$.


The homology poset of a CW-complex $X$  is  invariant under subdivisions of~$X$. Recall that a CW-complex~$Y$ is a \emph{subdivision} of~$X$  if~$X$ and~$Y$ share  the same underlying topological space and every   open cell  of~$X$ is a union of a finite number of open cells of~$Y$ (of the same or smaller dimension). For any $n$-cycle $E$ of~$X$,    the   $n$-cells of~$Y$ lying in the $n$-cells   belonging to~$E$ form an $n$-cycle of~$Y$. This defines an isomorphism of additive posets $H_n(X;\Z/2\Z)\simeq H_n(Y;\Z/2\Z)$.


 \subsection{Representation of homology classes} We relate the partial order in homology  to the  problem of representation of  homology classes by manifolds. For $n \geq 0$, by a closed $n$-manifold we mean a non-empty compact $n$-dimensional topological manifold with void  boundary. Given a closed $n$-manifold $M$, we let $[M]\in H_n(M; \Z/2\Z)$ be its fundamental class.   A homology class $a\in H_n(X;\Z/2\Z)$ of a CW-complex $X$  is \emph{represented} by a (continuous) map $i:M\to X$ if $i_\ast([M])=a$.

    \begin{theor}\label{repr-}  Let~$X$ be an $n$-dimensional CW-complex with $n\geq 0$. Let $a, b\in H=H_n(X;\Z/2\Z)$ be  homology classes    represented respectively by maps $i:M\to X$ and $j:N\to X$, where $M,N$ are  closed   $n$-manifolds. If $i(M) \cap j(N)=\emptyset$, then    $a$ and $b$ are independent elements of the additive poset~$H$.   \end{theor}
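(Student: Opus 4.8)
The plan is to translate the topological hypothesis $i(M)\cap j(N)=\emptyset$ into the purely combinatorial statement that the $n$-cycles representing $a$ and $b$ have disjoint supports in $I_n$, and then to conclude by Example~\ref{exa23----}. Indeed, $H$ is an additive subposet of the additive powerset $2^{I_n}$, so the independence relation on $H$ is the restriction of that on $2^{I_n}$; hence two classes of $H$ are independent precisely when their representing sets of $n$-cells are disjoint. Thus it suffices to prove $\supp(a)\cap\supp(b)=\emptyset$.

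First I would record the cellular description of a class of the form $i_\ast([M])$. For each $n$-cell $e\in I_n$, let $q:X\to X/X^{(n-1)}=\bigvee_{e'\in I_n}S^n_{e'}$ be the collapsing map and $p_e$ the projection onto the sphere $S^n_e$ indexed by $e$; the composite $p_e q:X\to S^n_e$ collapses $X\setminus e$ to the basepoint $\ast$ and carries the open cell $e$ homeomorphically onto $S^n_e\setminus\{\ast\}$. Under the standard identification $C_n=H_n(X,X^{(n-1)})\cong \tilde H_n(\bigvee_{e'}S^n_{e'})=\bigoplus_{e'}\Z/2\Z$ together with the inclusion $H=\Ker\partial_n\subset C_n$, the coefficient of $e$ in the $n$-cycle representing $a=i_\ast([M])$ is the image of $[M]$ under $(p_e q i)_\ast:H_n(M;\Z/2\Z)\to H_n(S^n_e;\Z/2\Z)=\Z/2\Z$; that is, the mod~$2$ degree of the composed map $f_e=p_e q i:M\to S^n_e$.

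The key geometric step is to show that if $e\in\supp(a)$, then the entire open cell $e$ is contained in $i(M)$. Now $e\in\supp(a)$ means $f_e$ has nonzero mod~$2$ degree. A map from a closed $n$-manifold to $S^n$ that misses a point factors through the contractible space $S^n\setminus\{\ast\}\cong\RR^n$ and hence induces the zero map on $H_n(-;\Z/2\Z)$; by contraposition $f_e$ is surjective. Since $p_e q$ maps the open cell $e$ homeomorphically onto $S^n_e\setminus\{\ast\}$ and sends everything else to $\ast$, surjectivity of $f_e$ forces every point of the open cell $e$ to lie in $i(M)$, so $e\subset i(M)$. The same argument gives $e\subset j(N)$ whenever $e\in\supp(b)$.

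Finally, were some $e$ to belong to both $\supp(a)$ and $\supp(b)$, the two inclusions above would yield $\emptyset\neq e\subset i(M)\cap j(N)$, contradicting the hypothesis. Hence $\supp(a)$ and $\supp(b)$ are disjoint subsets of $I_n$, and by Example~\ref{exa23----} the classes $a$ and $b$ are independent. The hard part will be the identification in the second paragraph: justifying that the $e$-coefficient of $i_\ast([M])$ is exactly the mod~$2$ degree of $f_e$, which rests on the cellular-homology isomorphism $C_n\cong\tilde H_n(X/X^{(n-1)})$, on the vanishing $H_n(X^{(n-1)};\Z/2\Z)=0$ placing $H$ inside $C_n$, and on the naturality of the $\Z/2\Z$-fundamental class. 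Once this is in place, the surjectivity-from-nonzero-degree argument and the disjointness conclusion are routine.
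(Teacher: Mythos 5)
Your proof is correct, but it takes a genuinely different route from the paper's. The paper argues softly: $i(M)$ and $j(N)$ are compact, hence closed, hence (being disjoint) separated by disjoint open sets $U\supset i(M)$ and $V\supset j(N)$; after passing to a sufficiently fine subdivision of~$X$ (permissible because the homological additive poset is subdivision-invariant), every $n$-cell meeting $i(M)$ lies in $U$ and every $n$-cell meeting $j(N)$ lies in $V$, so the cellular $n$-cycles representing $a$ and $b$ are disjoint; their union represents $a+b$, whence $a\leq a+b$. That argument only needs the easy observation that a cell disjoint from $i(M)$ cannot occur in the cycle for~$a$. You instead prove the sharper local statement that every $n$-cell $e$ in the support of $a=i_\ast([M])$ is \emph{entirely contained} in $i(M)$: the $e$-coefficient of $a$ equals the mod-$2$ degree of $s_e\circ i$ (precisely the identification the paper records in Section~\ref{Cohomological computation of the order} via the classes $X_e$), a map of nonzero mod-$2$ degree to $S^n$ is surjective, and $s_e$ is injective over $S^n\setminus\{\ast\}$ with preimage $e$. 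This makes the supports disjoint with no subdivision and no appeal to the existence of arbitrarily fine subdivisions (a point the paper's proof leaves implicit), at the price of the cellular-homology bookkeeping you flag in your second paragraph, which is standard and already set up in the paper. Your degree argument is in effect a map-level analogue of the open-and-closed argument the paper uses for embeddings in Theorem~\ref{repr}. Both proofs then conclude identically: disjoint supports in $2^{I_n}$ give $a\leq a+b$, as in Example~\ref{exa23----}.
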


    \begin{proof} Since~$M$ and~$N$ are compact, so are their images   $i(M) \subset X$ and $j(N) \subset X$. Since $X$ is Hausdorff, both   $i(M)$ and $j(N)$ are closed. If they are disjoint, then they  have disjoint open neighborhoods $i(M) \subset U$ and $j(N) \subset V$ (we use  the fact that all CW-complexes are normal). Taking a sufficiently small subdivision of $X$, we can assume that all  $n$-cells of $X$ meeting $i(M)$ are  contained in $U$ and all   $n$-cells of $X$ meeting $j(N)$ are  contained in $V$. Then the homology classes $a=i_\ast([M])$ and $b=j_\ast([N])$ are represented by disjoint $n$-cycles. The union of these   $n$-cycles represents $a+b$. By the definition of the partial order in $H=H_n(X;\Z/2\Z)$, we have $a\leq a+b$, so that $a$ and $b$ are independent.
       \end{proof}


 An \emph{embedding} of a closed manifold~$M$ into  a CW-complex $X$ is  an injective (continuous) map $M \to X$. Since $M$ is compact and $X$ is Hausdorff, such a map is a homeomorphism onto its image.

 \begin{theor}\label{repr}  Let~$X$ be an $n$-dimensional CW-complex with  $n\geq 0$.
 If  a homology class  $a\in H=H_n(X;\Z/2\Z)$ is represented by an embedding of a closed  $n$-manifold $M$ into~$X$, then~$a$ is a tile.
 Moreover, if $M$ is connected, then $a$ is an atom.     \end{theor}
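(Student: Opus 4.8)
The plan is to reduce the statement to a combinatorial property of the cellular cycle representing $a$ and to exploit the manifold structure of $M$ together with the subdivision invariance of the homological additive poset. First I would subdivide $X$: since $H$ and its additive poset structure are unchanged under subdivisions, I may pass to a fine subdivision in which $i(M)$ is a subcomplex of $X$ and the CW-structure induced on $M\cong i(M)$ is that of a closed combinatorial $n$-manifold, meaning that every $(n-1)$-cell of $M$ lies in the closure of exactly two distinct $n$-cells of $M$. The cellular $n$-cycle $E_a\subset I_n$ representing $a$ is then precisely the set of $n$-cells of $i(M)$, and, because $i(M)$ is a subcomplex, every $(n-1)$-cell incident to an $n$-cell of $E_a$ is itself a cell of $M$. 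In particular $E_a\neq\emptyset$, so $a\neq 0$.

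The heart of the argument is the connected case. Assume $M$ is connected and suppose, for contradiction, that $a$ is not an atom, so that there is $b\in H$ with $0<b<a$. Writing $E_b\subset E_a$ for the corresponding cycle, I would examine the cycle condition at each $(n-1)$-cell $e'$ of $M$: its two incident $n$-cells $e_1\neq e_2$ satisfy $[e_1:e']=[e_2:e']=1$, and no other cell of $E_a$ is incident to $e'$, so $\sum_{e\in E_b}[e:e']=0$ forces $e_1\in E_b\iff e_2\in E_b$. Thus $E_b$ is a union of connected components of the facet-adjacency graph $\Gamma$ whose vertices are the $n$-cells of $M$ and whose edges are the $(n-1)$-cells of $M$. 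Since $M$ is a connected manifold, a generic-path argument (a path between interior points of two $n$-cells can be pushed off the codimension-two $(n-2)$-skeleton) shows that $\Gamma$ is connected; hence $E_b$ is empty or all of $E_a$, contradicting $0<b<a$. Therefore $a$ is an atom.

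For the general (tile) case I would decompose the closed manifold into its finitely many connected components $M=M_1\sqcup\cdots\sqcup M_k$ and set $a_j=i_\ast([M_j])$, so that $a=a_1+\cdots+a_k$ and each $a_j$ is an atom by the connected case. Because distinct components are disjoint closed subsets, after subdivision the subcomplexes $i(M_j)$ share no cells, so $E_a=\bigsqcup_j E_{a_j}$ and, for every $b\leq a$, the cycle condition decouples across components: each $E_b\cap E_{a_j}$ is a cycle contained in $E_{a_j}$, hence equals $\emptyset$ or $E_{a_j}$ since $a_j$ is an atom. Consequently the tail $A_a$ consists exactly of the sums $\sum_{j\in S}a_j$ for $S\subseteq\{1,\dots,k\}$ and is isomorphic to the additive powerset $2^{\{1,\dots,k\}}$; by Theorem~\ref{tilestiles} this means $a$ is a tile. (Equivalently, the atoms of $A_a$ are precisely $a_1,\dots,a_k$, which are pairwise independent by Theorem~\ref{repr-}, so any two distinct atoms of $A_a$ are independent.)

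The main obstacle is the first, point-set step: arranging by subdivision that $i(M)$ is a subcomplex carrying a combinatorial closed-manifold structure with each $(n-1)$-cell a face of exactly two distinct $n$-cells. Unlike the proof of Theorem~\ref{repr-}, where it sufficed to shrink cells inside disjoint open neighborhoods, here I genuinely need the cycle $E_a$ to mirror the cellular structure of $M$, and controlling an arbitrary continuous embedding this precisely is the delicate point; once this normalization is in place, the homological and graph-theoretic steps are routine.
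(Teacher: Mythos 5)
There is a genuine gap, and it sits exactly where you flag it: the normalization step. You want to subdivide $X$ so that $i(M)$ becomes a subcomplex whose induced CW-structure is a combinatorial closed $n$-manifold, with every $(n-1)$-cell a face of exactly two $n$-cells. This is not merely delicate but unachievable in general. The map $i$ is an arbitrary continuous embedding of a \emph{topological} closed $n$-manifold into an arbitrary $n$-dimensional CW-complex: subdividing $X$ cannot force $i(M)$ to be a subcomplex, and even if it could, a topological $n$-manifold for $n\geq 5$ need not admit any triangulation, let alone a cell structure in which each $(n-1)$-cell bounds exactly two $n$-cells (that is a combinatorial/PL-type condition that even triangulable topological manifolds can fail). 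The subsequent steps inherit this problem: the facet-adjacency graph $\Gamma$ is only defined after the normalization, and the ``push a path off the $(n-2)$-skeleton'' general-position argument for its connectivity is again a PL argument with no justification in this topological setting. Since the whole proof is routed through this normalization, the gap is fatal as written.

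The paper avoids all of this with two observations you do not use. First, no subdivision is needed to see that $i(M)$ is a union of $n$-cells of $X$: for each open $n$-cell $e$, the set $e\cap i(M)$ is closed in $e$ (compactness of $M$ plus Hausdorffness of $X$) and also \emph{open} in $e$ (invariance of domain, since $i$ restricts to a homeomorphism of the open set $i^{-1}(e)\subset M$ onto its image in $e\cong\Int(D^n)$); connectedness of $e$ then gives $e\cap i(M)=\emptyset$ or $e\subset i(M)$, so the cells contained in $i(M)$ already form the cycle representing $a$. Second, the connected case is handled homologically rather than combinatorially: a class $0<b<a$ is supported in $i(M\setminus\{x\})$ for a suitable $x\in M$, and $H_n(M\setminus\{x\};\Z/2\Z)=0$ for a connected $n$-manifold, forcing $b=0$. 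Your treatment of the disconnected (tile) case is essentially the paper's --- decompose into components, invoke Theorem~\ref{repr-} for pairwise independence and the connected case for atomicity, and decouple any subcycle across the disjoint closed sets $i(M_k)$ --- and would go through once the connected case is established by the homological route; but the route you propose to the connected case does not work.
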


    \begin{proof} Let  $i:M \to X$ be an embedding representing~$a$. As in the proof of Theorem~\ref{repr-}, the set  $i(M)$   is  closed in $X$.    Therefore, for any (open) $n$-cell $e$ of~$X$,  the set $e \cap i(M)$ is closed in~$e$. It is also open in~$e$, as directly follows from the assumption that~$i$ is an embedding and~$M$ is a closed $n$-manifold.     Since~$e$ is connected,   either $e \cap i(M)= \emptyset$ or $e \subset i(M)$. It is clear then that the $n$-cells   of~$X$  contained  in
    $  i(M)$ form an $n$-cycle, say $E$,  representing   $a=i_\ast([M]) \in H$. Since $M \neq \emptyset$ and $i$ is an embedding, $E\neq \emptyset$ and so $a\neq 0$.

    We claim  that if $M$ is connected, then
  $a$ is an atom. We need  to show that any   $b \in H$ satisfying  $b<a$ is  equal to~$0$. Such~$b$ is  represented by an  $n$-cycle formed by some (but not all)   $n$-cells   of~$X$  contained in $i(M)$. These $n$-cells are    contained in $i(M \setminus
  \{x\})$ for some  $x\in M$. Hence,  $b$ lies in the image of the homomorphism $$H_n(M \setminus  \{x\};\Z/2\Z)  \to H=H_n(X;\Z/2\Z)  $$
   induced by  the restriction  of~$i$ to $M \setminus  \{x\}$. Since $M$ is a connected $n$-dimensional manifold, $H_n(M \setminus  \{x\};\Z/2\Z)=0$. Therefore $b=0$.

   Suppose now that $M$ has $m\geq 2$ connected components $M_1,..., M_m$. For $k=1,...,m$, set $a_k=i_\ast ([M_k])\in H$.
 Then  $$a= i_\ast ([M])= i_\ast ([M_1]+ \cdots + [M_m]) =a_1+ \cdots + a_m.$$
   By Theorem~\ref{repr-} and the previous paragraph, this is an  expansion of~$a$ as a sum of pairwise independent atoms.
   To prove that $a$ is a tile, it is enough to show that every atom $b\leq a$ coincides with one of the atoms $a_1,..., a_m$.
   By the above, $a=i_\ast([M])$ is represented by the  $n$-cycle $E$ consisting of all  $n$-cells   of~$X$ contained in
    $  i(M)$. The atom~$b$ is then represented by an $n$-cycle $F\subset E$.   The set~$F$  splits as  a disjoint union  $ \amalg_{k=1}^m       F_k$ where  $F_k$ is the set of     $n$-cells  of~$X$ which belong to~$F$ and are contained   in
    $ i(M_k)$. Since the sets $ \{i(M_k)\}_{k=1}^m$ are closed and pairwise disjoint, the assumption that $F$ is an $n$-cycle implies that  $F_k$ is an $n$-cycle for all $k$.  The arguments above show that either $F_k=\emptyset$   represents $0\in H  $ or   $F_k$ includes all   $n$-cells  of~$X$   contained   in
    $ i(M_k)$ and represents   $a_k \in H$.     Consequently,  the homology class~$b$ represented by~$F$ is a sum of several   classes $a_k$. Since these classes are   independent, $a_k \leq b$ for all~$k$ in question.   Since~$b$ is an atom, $b=a_k$ for some $k$.
       \end{proof}

       \subsection{Examples}\label{exewed}  1. Let $X$ be a   wedge of several $n$-dimensional spheres  with  $n \geq 1$. The homology classes of these spheres   form a basis,   $I$, of   $H_n(X;\Z/2\Z) $.  It is clear  from the definitions that the homology poset of $X$ is   nothing but  $2^I_f$.

       2.  Consider  a CW-complex $Y$ obtained by gluing several $n$-balls  along   their boundary spheres.
       The homology poset of $Y$ is     $2^I_{ev}$, where~$I$ is the set of   $n$-balls forming~$Y$.

        \subsection{Remarks}   1. In general, the   partial order in the homology poset   is not preserved  under homotopy equivalences of CW-complexes. Consider, for instance, the CW-complex $Y$ obtained by gluing three $n$-balls along their boundary spheres. Then $H_n(Y;\Z/2\Z) = (\Z/2\Z)^2$ with  trivial partial order. Collapsing one of the  balls into a point, we turn $Y$ into  a wedge~$X$ of two $n$-spheres, and the partial order   in $H_n(X;\Z/2\Z)$ is   non-trivial as is clear from Example \ref{exewed}.1.

2. The definition of the homology poset  of a CW-complex readily extends to CW-pairs and relative CW-complexes. We do not pursue this line.



\section{Geometric realization of additive posets}

\subsection{Realization theorem}\label{Realization  theorem} A    CW-complex is    \emph{finite} if it has   a finite number of cells. Clearly, the homology poset of a finite CW-complex $X$ is plain and its complexity  is smaller than or equal to the number  of  top-dimensional cells of~$X$.  

 \begin{theor}\label{realizadpos}   For any plain additive poset~$A$ and any integer $n \geq 2$, there is a finite   $n$-dimensional CW-complex whose homology poset is isomorphic to~$A$.
     \end{theor}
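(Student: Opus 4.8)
The plan is to reduce the statement to a linear-algebra problem about cellular boundary maps and then to solve that problem geometrically by a CW-complex assembled from a wedge of spheres. The starting point is that, since $A$ is plain, by definition there is an embedding $A \hookrightarrow 2^I$ for some finite set $I$; let $H \subset 2^I$ be its image, a subgroup of the $\Z/2\Z$-vector space $(\Z/2\Z)^I$ equipped with the inclusion partial order, so that $A \cong H$ as additive posets. The key structural observation is that for a CW-complex whose set of $n$-cells is $I_n$, the homological additive poset is exactly $\Ker \partial_n \subset 2^{I_n}$ with the inclusion order, and that when there are no cells of dimension greater than $n$ one has $H_n(X;\Z/2\Z) = \Ker \partial_n$ since $\Ima \partial_{n+1}=0$. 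Thus it suffices to build a finite $n$-dimensional CW-complex $X$ whose set of $n$-cells is identified with $I$ and whose top boundary homomorphism $\partial_n : C_n \to C_{n-1}$ satisfies $\Ker \partial_n = H$, where $C_n = (\Z/2\Z)^I$.

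To prescribe the boundary I would first pick any $\Z/2\Z$-linear map with kernel $H$, for instance the quotient map $q : C_n \to C_n/H$. Choosing a basis of $C_n/H$ indexed by a finite set $J$ records $q$ by a matrix $(q_{ji})_{j\in J,\, i\in I}$ over $\Z/2\Z$. I then construct $X$ as follows: take one $0$-cell, attach one $(n-1)$-cell for each $j\in J$ so that the $(n-1)$-skeleton is a wedge $\bigvee_{j\in J} S^{n-1}_j$ (with trivial intermediate skeleta when $n\geq 3$), and attach one $n$-cell for each $i\in I$ along an attaching map $\phi_i : S^{n-1} \to \bigvee_{j\in J} S^{n-1}_j$ chosen to represent the element $\sum_{j\,:\,q_{ji}=1} [S^{n-1}_j] \in \pi_{n-1}\bigl(\bigvee_{j\in J} S^{n-1}_j\bigr)$.

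This is exactly where the hypothesis $n\geq 2$ enters. The wedge $\bigvee_{j\in J} S^{n-1}_j$ is $(n-2)$-connected, so for $n\geq 3$ the Hurewicz theorem identifies $\pi_{n-1}$ with $\bigoplus_{j\in J} \Z$, and attaching maps of any prescribed multidegree therefore exist; for $n=2$ one instead realizes the prescribed mod-$2$ multidegrees by suitable words in the free group $\pi_1\bigl(\bigvee_{j\in J} S^1_j\bigr)$. By the very definition of the incidence numbers, collapsing all of the $(n-1)$-skeleton except the $j$-th sphere and composing with $\phi_i$ computes the $j$-th component of the multidegree of $\phi_i$, whence $[e_i:e'_j]=q_{ji}$; so $\partial_n$ agrees with $q$ after identifying $C_{n-1}$ with $C_n/H$ via the basis $J$. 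Consequently $H_n(X;\Z/2\Z)=\Ker\partial_n=\Ker q = H$, and the homological additive poset of $X$ is $H \subset 2^{I_n}=2^I$ with the inclusion order, i.e. it is isomorphic to $A$, as desired.

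The main obstacle is precisely the geometric realization step: showing that an arbitrary $\Z/2\Z$ boundary matrix arises from genuine attaching maps. This is what the connectivity of the wedge of spheres together with the Hurewicz theorem (respectively the free-group computation in the case $n=2$) supply, and it is also the reason the theorem must exclude $n=1$, where $\pi_0$ of the $0$-skeleton carries no such higher-degree information and the analogous realization fails.
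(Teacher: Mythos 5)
Your proof is correct, but it takes a genuinely different route from the paper's. You reduce the theorem to a chain-level statement -- since the homological additive poset is $\Ker\partial_n\subset 2^{I_n}$ with the inclusion order, it suffices to realize the image $H\subset 2^I$ of the given embedding as the kernel of a cellular boundary map -- and you then realize an arbitrary prescribed $\Z/2\Z$-matrix $q:C_n\to C_n/H$ by attaching $n$-cells to a wedge of $(n-1)$-spheres along maps of prescribed multidegree, with existence supplied by the Hurewicz theorem for $n\geq 3$ and by words in the free group $\pi_1(\bigvee_j S^1_j)$ for $n=2$. The paper instead works dually: it encodes the partial order by the separating set $S_X\subset H^n(X;\Z/2\Z)$ of cohomology classes dual to $n$-cells (Theorem \ref{simple} and Section \ref{Cohomological computation of the order}), and proves Lemma \ref{lemma59} by starting from a wedge of $n$-spheres and performing an induction over two geometric moves $M_1,M_2$ on the set $S$, each realized by gluing an $n$-ball embedded in one $n$-cell to an $n$-ball in another; this keeps the whole construction inside the top dimension and tracks $S_X$ step by step. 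Your argument is shorter, uses only standard facts about homotopy groups of wedges of spheres, and makes completely transparent where $n\geq 2$ enters (for $n=1$ an edge has exactly two endpoints, so $\partial_1$ cannot realize arbitrary mod-$2$ columns, consistent with Theorem \ref{trealgraphs}); the paper's moves-based construction buys an explicit geometric description of how the complex evolves and ties directly into the complexity discussion via $c(A)=\min_S\vert S\vert$. Both constructions use a number of $n$-cells that can be taken equal to $c(A)$, so neither is wasteful in the sense of the inequality \eqref{ineqq}.
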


     The rest of this section presents a proof of Theorem~\ref{realizadpos}.

     \subsection{Cohomological computations}\label{Cohomological computation of the order}      Consider  an $n$-dimensional CW-complex~$X$. In Section~\ref{CW-complexes} we used inclusions of $n$-cycles to define a partial order  in 
 $H=H_n(X; \Z/2\Z)$. Here we compute this order in cohomological terms. Using the standard evaluation of cohomology classes on homology classes, we  identify $H^n(X; \Z/2\Z)$ with $H^*=\Hom(H, \Z/2\Z)$.
Each  (open) $n$-cell $e $ of~$X$  gives rise to a homotopy class of maps $s_e:X \to S^n$   obtained by collapsing $X \setminus e$ to a point  and identifying the resulting quotient space of~$X$ with the $n$-sphere $S^n$ via a  homeomorphism.  Set   $$X_e=(s_e)^* (z)\in H^n(X; \Z/2\Z) =H^* ,$$ where   $z $ is the    non-zero element of   $ H^n(S^n; \Z/2\Z)
=\Z/2\Z$.  We let $ S_X \subset  H^*$  be the set of all vectors in  $H^*$ associated in this way with the  $n$-cells of~$X$.  Note  that different $n$-cells of~$X$ may give rise to the same vector  and  that $\cap_{s\in S_X} \Ker s=0$. Indeed,  any non-zero $h\in H$ is represented by a  non-empty cellular $n$-cycle $E$ in~$X$, and then $ X_e(h)=1$ for all $e\in E$.    By Theorem~\ref{presd}, the set  $ S_X$  determines a partial order in~$H$   turning~$H$ into  an additive poset.
It follows from the definitions that this   partial order  in~$H$ coincides with the partial order defined   in Section~\ref{CW-complexes}.

  We say that a pair (a  $\Z/2\Z$-vector space $B$, a  set $S \subset B$) is \emph{realized} by an
  $n$-dimensional CW-complex $X$ if there is  an isomorphism of $\Z/2\Z$-vector spaces    $  H^n(X; \Z/2\Z) \to B$   carrying     $S_X$ onto $S$. The pair $(B, S )$    is \emph{realizable} if it  is realized by   a finite
  $n$-dimensional CW-complex. For example, the pair  (a  finite-dimensional   $\Z/2\Z$-vector space~$B$, a basis of~$B$)    is realized by a wedge of $n$-spheres.


 \begin{lemma}\label{lemma59}   For any $n \geq 2$, every pair $(B,S \subset B \setminus \{0\})$ where  $B$ is a  finite-dimensional   $\Z/2\Z$-vector space  and $S$ generates~$B$ as a vector space, is realizable. \end{lemma}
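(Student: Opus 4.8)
The plan is to reduce the lemma to the purely combinatorial task of realizing a prescribed mod-$2$ incidence matrix between cells of dimensions $n-1$ and $n$, and then to solve that task by an explicit pinching construction. Enumerate $S=\{s_1,\dots,s_m\}$. The complex $X$ I would build will have exactly $m$ top cells $e_1,\dots,e_m$, so its cellular $n$-cochain space will be $C^n=(\Z/2\Z)^m$ with dual basis $e_1^*,\dots,e_m^*$, and $X_{e_i}=[e_i^*]$ in $H^n(X;\Z/2\Z)$. Consider the surjection $\pi:C^n\to B$ determined by $\pi(e_i^*)=s_i$; it is onto because $S$ generates $B$. Put $K=\Ker\pi$ and fix a basis $r_1,\dots,r_k$ of $K$, where $k=m-\dim B$. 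Writing $r_j=\sum_{i\in F_j}e_i^*$ with $F_j\subset\{1,\dots,m\}$, I obtain the target incidence data: the desired complex should carry $k$ cells $c_1,\dots,c_k$ of dimension $n-1$ with $[e_i:c_j]=1$ exactly when $i\in F_j$.

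Next I would construct $X$ explicitly. For its $(n-1)$-skeleton take the wedge $Y=\bigvee_{j=1}^{k}S^{n-1}$ of $k$ spheres, each carrying the CW-structure with one $0$-cell and a single $(n-1)$-cell $c_j$; this is legitimate since $n-1\geq 1$. Then attach the top cells, gluing $e_i$ along a map $\varphi_i:S^{n-1}\to Y$ whose mod-$2$ degree onto the summand $S^{n-1}_j$ equals $1$ for those $j$ with $i\in F_j$ and $0$ otherwise. Such a $\varphi_i$ is produced by pinching: choose pairwise disjoint $(n-1)$-balls in $S^{n-1}$ indexed by $\{j:i\in F_j\}$, send the complement of their interiors to the wedge point, and map each chosen ball onto the corresponding $S^{n-1}_j$ by a degree-one map taking its boundary to the wedge point. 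Composing $\varphi_i$ with the collapse $Y\to S^{n-1}_j$ then has mod-$2$ degree $1$ precisely when $i\in F_j$, which is exactly the definition of $[e_i:c_j]$ in Section~\ref{CW-complexes}.

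It then remains to read off the cohomology of $X$. Its only $(n-1)$-cells are $c_1,\dots,c_k$, so $C^{n-1}$ has these as a basis and $\delta^{n-1}(c_j^*)=\sum_i[e_i:c_j]\,e_i^*=r_j$; hence the image of $\delta^{n-1}$ is the span of $r_1,\dots,r_k$, namely $K$. Since $X$ has no cells of dimension exceeding $n$, we get $H^n(X;\Z/2\Z)=C^n/\Ima\,\delta^{n-1}=(\Z/2\Z)^m/K$, and $\pi$ descends to an isomorphism $H^n(X;\Z/2\Z)\cong B$ sending $X_{e_i}=[e_i^*]$ to $s_i$. Consequently this isomorphism carries $S_X=\{X_{e_1},\dots,X_{e_m}\}$ onto $\{s_1,\dots,s_m\}=S$, and since $X$ is finite and $n$-dimensional the pair $(B,S)$ is realized.

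The only genuine difficulty is the construction of the attaching maps $\varphi_i$ with independently prescribed mod-$2$ degrees onto the various summand spheres; the pinching construction settles this uniformly for every $n\geq 2$. By contrast, for $n=1$ the sphere $S^{n-1}=S^0$ degenerates and this freedom disappears, which is precisely why the analogous realization statement fails for graphs.
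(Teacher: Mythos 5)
Your proof is correct, but it takes a genuinely different route from the paper's. The paper handles the basis case by a wedge of $n$-spheres and then reaches an arbitrary generating set by induction on two moves ($M_1$: adjoin $a+b$ to $S$; $M_2$: replace $a$ by $a+b$), each implemented geometrically by gluing an $n$-ball simultaneously into the closures of the two $n$-cells representing $a$ and $b$; the bulk of the work is verifying that each such surgery changes $S_X$ by exactly one move while preserving $H_n$. You instead build the complex in a single step: you take the $(n-1)$-skeleton to be a wedge of $k=\vert S\vert-\dim B$ copies of $S^{n-1}$ and use pinch maps to prescribe the entire mod-$2$ incidence matrix, arranged so that $\Ima\,\delta^{n-1}$ is exactly the kernel of the evaluation $e_i^*\mapsto s_i$. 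Your identification $X_{e_i}=[e_i^*]$, the computation $H^n(X;\Z/2\Z)=C^n/\Ima\,\delta^{n-1}$ (valid since there are no cells above dimension $n$), and the existence of attaching maps $S^{n-1}\to\bigvee_j S^{n-1}_j$ with independently prescribed mod-$2$ degrees for $n\geq 2$ are all sound, and since the elements of $S$ are distinct the induced map carries $S_X$ bijectively onto $S$. What your version buys is a single explicit model with the homological algebra laid bare, making transparent exactly where $n\geq 2$ is used (one needs maps of $S^{n-1}$ of both parities of degree); what the paper's version buys is step-by-step control of the homotopy type (each move is, up to homotopy, adjoining an arc), which is what lets it identify $H_n(X)\to H_n(Y)$ canonically after each surgery. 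Both arguments are complete.
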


 \begin{proof}  We define two moves   $M_1, M_2$   transforming~$S $.   The move $M_1$ adds to $S$ a new  element  of the form $a+b$ for some distinct $a,b\in S$.
 The move $M_2$
 takes two distinct  $a,b\in S$,  adds   $a+b$  to~$S$, and deletes~$b$ from~$S$.  Note that in both cases   $a+b\neq 0$. If     $ a+b \in S$ before the   move, then $M_1$ keeps~$S$   and $M_2$    deletes~$b$ from~$S$.

 The lemma is a   consequence of  the following two claims:

 (a)   For any set $S \subset B \setminus \{0\}$ generating $B$ as a vector space, there is a sequence of moves $M_1, M_2$ transforming  a basis of $B$ into $S$;

  (b) If a pair $(B,S \subset B \setminus \{0\})$ is realizable and a set $S' \subset B \setminus \{0\}$ is obtained from $S$ by    $M_1$ or $ M_2$, then  the pair $(B,S')$ is realizable.

    We prove  (a). Since the set~$S$ generates~$B$,   it contains a basis $S_0$ of $B$. Each    $s\in S$ expands uniquely as a   sum  (without repetitions) of   vectors of  $S_0$. We call the number of summands the \emph{size} of~$s$.   Using   $M_1, M_2$, we   can consecutively add the vectors of $S \setminus S_0$ to~$S_0$ and thus    transform~$S_0$  into~$S$. For example, if   $s \in S \setminus S_0$ expands as $s=s_1+s_2+s_3$ with $s_1,s_2,s_3 \in S_0$, then we first apply $M_1$   adding   $s_1
  +s_2$    and then  apply $M_2$ adding $s_1+s_2+s_3$ and deleting $s_1
  +s_2$. To avoid  interactions between the  moves, we   first add   to $S_0$   the vectors of $S \setminus S_0$ of the maximal size,   then the vectors of the maximal size  minus~1,  etc.

  We now prove   (b). Suppose that the pair $(B,S)$ is realized by a finite $n$-dimensional CW-complex $X$. To simplify   notation, we identify $B$ with $  H^n(X; \Z/2\Z)  $  and~$S$ with $S_X$. Pick any distinct $a,b\in S$. Then $a=X_e$ and $b=X_f$ where~$e$ and~$f$ are  distinct (open) $n$-cells of $X$. Consider their closures   $\overline e \supset e$   and $\overline f \supset f$  in~$X$. Subdividing if necessary the $(n-1)$-skeleton $X^{(n-1)} $ of~$X$ (such subdivisions keep $  H^n(X; \Z/2\Z)  $  and  $S_X$) we can assume that $X$ has at  least one 0-cell $x_e$ lying in $\overline e \setminus e$ and at  least one 0-cell $x_f$ lying in $\overline f \setminus f$.  Consider an $n$-ball $D $  viewed as a CW-complex with one  $0$-cell $x\in \partial D $, one $(n-1)$-cell $\partial D\setminus \{x\}$, and one $n$-cell $\Int(D)= D \setminus \partial D$. Pick an embedding $i: D  \hookrightarrow \overline e$ such  that $i(x)=x_e$, $i(D  \setminus \{x\}) \subset  e$, and $ e'=e\setminus i(D)$ is an $n$-cell. Similarly, pick an embedding $j: D  \hookrightarrow \overline f$ such that $j(x)=x_f$, $j(D  \setminus \{x\}) \subset  f$, and $f'=f \setminus j(D)$ is an $n$-cell. We form a quotient space~$Y$ of~$X$ by identifying $i(d)=j(d)$ for all $d\in D$. The  space $Y$ is a finite CW-complex whose   $(n-1)$-skeleton   is obtained from     $X^{(n-1)} \amalg  \partial D$ by the identification   $x_e= x=x_f  $. The $n$-cells of~$Y$ are the images under the projection $X \to Y$ of the $n$-cells of $X$ distinct from $e,f$ together with $e'$, $f'$, and the $n$-cell $ g =i(\Int D)=j(\Int D) $.
  Considered up to homotopy equivalence,  $Y$ either  coincides with~$X$ (if $x_e=x_f \in X$) or is obtained from~$X$ by adjoining an arc connecting   $x_e$ to $x_f$ (if $x_e\neq x_f  $). Since $n \geq 2$, in both cases the projection $X \to Y$ induces an additive isomorphism   $H_n(X; \Z/2\Z)\to H_n(Y; \Z/2\Z)$. Identifying these two groups along this isomorphism, we obtain that  $Y_c=X_c$ for all $n$-cells~$c$ of~$X$ distinct from $e,f$ and
  $$Y_{e'}=X_{e}=a, \quad Y_{f'}=X_{f}=b, \quad Y_{g}=a+b.$$ Thus, $S_Y$ is obtained from $S=S_X$ by the move $M_1$. This proves the part of Claim (b) concerning $M_1$.


The part of Claim (b) concerning $M_2$ is proved similarly using the same $e,f,D, i$  as above but a different  map $j:D\to \overline f$. Observe that~$X$ can be  obtained from  $X \setminus f$ by attaching the $n$-ball $D$ along a  map $  \partial D  \to X \setminus f$.  We let $j $ be the  composition of the inclusion $D \hookrightarrow D \amalg (X \setminus f)$ with the projection  $D \amalg (X \setminus f) \to X$. Then~$j$ restricts to    a homeomorphism $\Int D \approx f $.  We form a quotient space~$Y$ of~$X$ by identifying $i(d)=j(d)$ for all $d\in D$.  As above, $Y$ is a finite CW-complex whose $n$-cells   are the images under the projection $X \to Y$ of the $n$-cells of $X$ distinct from $e$ together with $ e'=e\setminus i(D) $.
 The projection $X \to Y$ induces an additive isomorphism   $H_n(X; \Z/2\Z)\to H_n(Y; \Z/2\Z)$,  and identifying these two groups along this isomorphism we obtain that  $Y_c=X_c$ for all $n$-cells~$c$ of~$X$  distinct from~$e$ and~$f$, $Y_{e'}=a$,
   $Y_{f}=a+b$. Thus,  $S_Y$ is obtained from $S=S_X$ by the move $M_2$. This  completes the proof of Claim~(b) and of the lemma. \end{proof}

\subsection{Proof of Theorem~\ref{realizadpos}} Given a plain additive poset~$A$, Theorem~\ref{simple} implies that the partial order in~$A$ is determined  as in  Theorem~\ref{presd} by  a set $S\subset   A^*$ such that $\cap_{s\in S} \Ker s=0$. Eliminating if necessary the zero vector from~$S$, we can assume that $S\subset   A^* \setminus \{0\}$. Since the additive poset~$A$ is    plain, its underlying $\Z/2\Z$-vector space   is finite-dimensional. Therefore,    the equality $\cap_{s\in S} \Ker s=0$ implies that $S$ generates $A^*$.  By Lemma~\ref{lemma59}, there is a finite
  $n$-dimensional CW-complex $X$ and  an isomorphism     $  H^n(X; \Z/2\Z) \to A^*$   carrying   $S_X$ onto $S$. The dual isomorphism $A \to H_n(X; \Z/2\Z)$ is an isomorphism of the additive poset~$A$ onto the homology poset of~$X$.


\section{Homology posets of graphs}\label{Homological additive posets of graphs}

\subsection{Graphs and atoms} By a \emph{graph}, we mean a 1-dimensional CW-complex. A graph~$\Gamma$ is obtained by attaching  copies of the segment $[-1, 1]$ to a discrete set of points. The points of the latter set   are called vertices of~$\Gamma$ and the copies of $[-1, 1]$ in question are called edges of~$\Gamma$. Each edge~$e$ is attached to two (possibly, coinciding)  vertices  called the endpoints of~$e$.  A 1-cycle in~$\Gamma$  is a finite set  of edges such that every vertex of~$\Gamma$ is   incident to an even number of edges in this set (counting with multiplicities).  Every element of the group $H=H_1(\Gamma; \Z/2\Z)$ is represented by a unique 1-cycle.
As in Section \ref{Additive posets in topology}, we    give~$H$ the partial order
    $a\leq b$ whenever the 1-cycle representing $a\in H$ is  contained in the 1-cycle representing $b\in H$. As we know, this partial order is invariant under subdivisions of~$\Gamma$ and turns~$H$ into an additive poset,    the homology poset of $\Gamma$.

 \begin{lemma}\label{reprgraph45}   Let $\Gamma$ be a graph.
For any nonzero homology class  $a\in  H=H_1(\Gamma;\Z/2\Z)$, there is an embedded circle   in~$\Gamma$ representing a   homology class $b\in H$ such that $b\leq a$.     \end{lemma}

    \begin{proof}   Let  $E$ be a 1-cycle in~$\Gamma$ representing~$a$. If $E$ contains a loop,    that is  an edge with coinciding endpoints, then this loop  determines an embedding
      $S^1 \hookrightarrow   \Gamma$ representing a   nonzero homology class $b\in H $ such that  $b\leq  a$.
    Suppose   that~$E$ has no loops.   Pick any edge $e_1\in E$ with endpoints $x_1,x_2$.   Since $E$ is a 1-cycle, the vertex $x_2$ is incident to an edge $e_2\in E \setminus \{e_1\}$. Let $x_3$ be the   endpoint of~$e_2$ distinct from $x_2$ (possibly, $x_3=x_1$).  Since $E$ is a 1-cycle, the vertex $x_3$ is incident to an edge $e_3\in E \setminus \{ e_2\}$. Continuing by induction,   we construct   edges $e_1, e_2, e_3,... \in E$   such that any  two consecutive edges  $e_{i-1}, e_{i}$  are distinct and share a  vertex $x_i$. Since~$E$ is a finite set,  there must be   indices $k < l$ such that $x_k=x_l$. We take such  $k,l$ with the smallest  $l-k  $. Then the edges $e_k, e_{k+1},..., e_{l-1}$ are  pairwise distinct and have no common vertices except that  $e_{i-1}, e_{i}$    share the  vertex $x_i$ for all $i$ and $e_k$, $e_{l-1}$ share the   vertex $x_k=x_l$. Then the edges $e_k, e_{k+1},..., e_{l-1}$ form  an embedded circle     representing a    homology class $b\in H $ such that  $b \leq  a$.     \end{proof}

\begin{theor}\label{reprgraph}   Let $\Gamma$ be a graph.
A homology class  $a\in   H_1(\Gamma;\Z/2\Z)$ is represented by an embedded  circle   in~$\Gamma$ if and only if~$a$ is  an atom.     \end{theor}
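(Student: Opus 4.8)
The plan is to prove the two implications separately, drawing on the two results that immediately precede the statement. The \emph{only if} direction is essentially free: a circle $S^1$ is a connected closed $1$-manifold, and an embedding of a circle into $\Gamma$ is in particular an embedding of a closed connected $1$-manifold. Thus Theorem~\ref{repr}, applied with $n=1$ to the connected manifold $M=S^1$, directly yields that any homology class represented by such an embedding is an atom. I do not expect any difficulty here; it is a pure specialization of the general topological result.

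The \emph{if} direction carries the real content, and it is precisely the place where graphs behave better than higher-dimensional CW-complexes (recall that the analogue fails for $n\geq 2$). Suppose $a$ is an atom; in particular $a\neq 0$. First I would invoke Lemma~\ref{reprgraph45} to produce a nonzero homology class $b\in H$ with $b\leq a$ that is represented by an embedded circle in $\Gamma$. The remaining step is then purely order-theoretic: since $a$ is an atom, its tail $A_a=\{c\in H \mid c\leq a\}$ consists only of $0$ and $a$. As $b\leq a$ and $b\neq 0$, this forces $b=a$, so the embedded circle representing $b$ also represents $a$. This completes the argument.

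The main obstacle is in fact packaged inside Lemma~\ref{reprgraph45}, which I am permitted to assume: extracting an embedded circle from an arbitrary nonzero $1$-cycle. Its content is the standard graph-theoretic fact that a nonempty $1$-cycle (every vertex incident to an even number of its edges) contains a simple cycle, obtained either directly from a loop or by following edges from an arbitrary starting edge until a vertex repeats and then taking the shortest such return. Once this lemma and the \emph{only if} direction are in hand, the theorem reduces to combining Theorem~\ref{repr}, Lemma~\ref{reprgraph45}, and the definition of an atom, so the proof itself is short.
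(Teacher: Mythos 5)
Your proof is correct and follows exactly the paper's own argument: the \emph{only if} direction is the specialization of Theorem~\ref{repr} to $n=1$ and a connected $M=S^1$, and the \emph{if} direction combines Lemma~\ref{reprgraph45} with the definition of an atom to force $b=a$. No gaps; this is the same two-step reduction the paper uses.
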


    \begin{proof} If $a$ is represented by an embedded circle, then $a$ is an atom by Theorem~\ref{repr}.
    Conversely, if $a$ is an atom, then by Lemma \ref{reprgraph45}, there is a  homology class $b\in H_1(\Gamma;\Z/2\Z)$ such that $b\leq a$ and~$b$ is represented by an embedded circle. The latter condition implies that $b\neq 0$. Since~$a$ is an atom,   $b=a$, so that~$a$ is represented by an embedded circle.
    \end{proof}


\subsection{Geometric tiles}\label{Geometric tiles}    To extend Theorem \ref{reprgraph}  to  tiles,  we define  certain  graphs called geometric tiles.  Recall  that   a \emph{wedge} of    graphs $\Gamma_1 $ and $\Gamma_2$ is a   graph
    obtained from the disjoint union $\Gamma_1 \amalg \Gamma_2$ by identifying a vertex of~$\Gamma_1$ with a vertex of~$\Gamma_2$.  The wedge may depend on the choice of the vertices.


    A \emph{geometric tile  of weight
    1} is a graph  whose underlying topological space is homeomorphic to a circle. Suppose that   geometric tiles  of weights $1, 2, ..., n$ are already defined for an integer $n\geq 1$. A \emph{geometric tile  of weight
    $n+1$} is a graph which is either  a wedge of  a geometric tile  of weight~$n $ and a geometric tile  of weight~1 or  a disjoint union  of two geometric tiles  of weights $p,q \leq n$ with
    $p+q=n+1$.     For example, a disjoint union of $n\geq 1$ geometric tiles  of weight
    1 is a geometric tile  of weight
     $n$. An   induction on the  weight $w(T) $ of a   geometric tile~$T$ shows that~$T$ expands as a union of $w(T)$ embedded circles    meeting    only in  vertices of~$T$ (or not at all).
    Moreover, $T$ does not contain embedded circles other than those in this expansion (as is easily shown by induction on $w(T)$).


Given a graph $\Gamma$ and a homology class  $a\in   H_1(\Gamma; \Z/2\Z)$, the edges of $\Gamma$ belonging to the  unique 1-cycle representing~$a$ and their endpoints form a   graph     $\Gamma_a \subset \Gamma$  called the \emph{support} of $a$.

    \begin{theor}\label{tilegraph}   Let $\Gamma$ be a graph.
An element of   $  H=  H_1(\Gamma;\Z/2\Z)$ is a tile if and only if its support   is a geometric tile.     \end{theor}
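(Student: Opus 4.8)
The plan is to convert both sides of the equivalence into one combinatorial property of the support, namely that the embedded circles contained in $\Gamma_a$ are pairwise edge-disjoint, and then match this property with the inductive definition of a geometric tile. First I would set up the dictionary between atoms and circles. By Theorem~\ref{reprgraph}, a nonzero class is an atom if and only if it is carried by an embedded circle; and since $c\leq a$ means that the $1$-cycle of $c$ sits inside the $1$-cycle of $a$, the atoms of the tail $H_a$ are exactly the embedded circles contained in $\Gamma_a$. Next I would unwind independence: two classes $b,c$ are independent precisely when $b\leq b+c$, and since $b+c$ is represented by the symmetric difference of the two edge sets, the inclusion $\supp(b)\subseteq\supp(b)\triangle\supp(c)$ holds if and only if these edge sets are disjoint. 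Thus two embedded circles in $\Gamma_a$ are independent exactly when they share no edge, and combining this with the definition of a tile, $a$ is a tile if and only if any two distinct embedded circles contained in $\Gamma_a$ are edge-disjoint.

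For the direction ``geometric tile $\Rightarrow$ tile'' this is immediate. The structural description recorded after the definition says that a geometric tile $T$ is a union of $w(T)$ embedded circles meeting only in vertices and contains no further embedded circles; hence the embedded circles in $\Gamma_a=T$ are precisely those $w(T)$ pairwise edge-disjoint circles, and by the dictionary above $a$ is a tile (it is nonzero because $T$ has an edge).

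For the converse I would show that the edge-disjointness condition forces the cactus structure underlying the inductive definition. Since $\Gamma_a$ is the support of a $1$-cycle, every vertex has even, hence positive, degree, so $\Gamma_a$ decomposes into edge-disjoint simple cycles and in particular every edge lies on some embedded circle; together with pairwise edge-disjointness this means every edge lies on \emph{exactly} one embedded circle, and in particular there are no bridges. I would then invoke the standard fact that in a $2$-connected graph any two edges lie on a common cycle to conclude that each block of $\Gamma_a$ is a single embedded circle: if edges $e,g$ of a block lie on embedded circles $C,C'$ through a fixed edge $e$, then $C$ and $C'$ share $e$ and so coincide, forcing all edges of the block into one circle $C$, whence the block equals $C$ by maximality. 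Passing to connected components, it suffices to realize each connected such graph as an iterated wedge of circles; I would induct on the number of blocks, using the block--cut tree to pick a leaf block $C$, an embedded circle attached to the rest through a single cut vertex, and write $\Gamma_a$ as the wedge at that vertex of $C$ with the graph obtained by deleting the edges of $C$. Deleting a simple cycle lowers each incident degree by $2$, so the smaller graph is again an even-degree support with pairwise edge-disjoint circles and one fewer block; by induction it is a geometric tile, and wedging on $C$ exhibits $\Gamma_a$ as one too. A disjoint union of the components is then a geometric tile by the disjoint-union clause of the definition.

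The main obstacle is this last converse step: extracting the inductive wedge decomposition from the abstract edge-disjointness hypothesis. All the real work lies in verifying that the blocks of $\Gamma_a$ are exactly its embedded circles and that a leaf block meets the remainder of the graph in a single vertex, which is precisely what makes the wedge operation from the definition of a geometric tile apply cleanly.
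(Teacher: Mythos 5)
Your proof is correct, but the converse direction follows a genuinely different route from the paper's. Both arguments share the same foundation: Theorem~\ref{reprgraph} identifies the atoms of the tail of $a$ with the embedded circles in $\Gamma_a$, and independence of two classes translates into edge-disjointness of their supports (since $b\leq b+c$ forces $\supp(b)$ to lie in the symmetric difference); your forward direction is then essentially the paper's. For the converse, the paper stays inside its own algebraic framework: it invokes Theorem~\ref{tilestiles} to expand $a$ as the sum of all atoms $a_1,\dots,a_n$ of its tail, inducts on $n$, and shows that the last circle $S_n$ meets each connected component of $S_1\cup\dots\cup S_{n-1}$ in at most one vertex --- otherwise an arc $\alpha\subset S_n$ together with an embedded path $\beta$ in that component would form an embedded circle not among the $S_i$, contradicting the tile hypothesis. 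You instead distill the tile condition into the purely combinatorial statement that all embedded circles in $\Gamma_a$ are pairwise edge-disjoint, and then bring in standard graph theory: the even-degree (Veblen) decomposition to see that every edge lies on exactly one circle, the fact that two edges of a $2$-connected block lie on a common cycle to conclude that every block is a single circle (your contradiction circle plays the role of the paper's $\alpha\cup\beta$), and the block--cut tree to peel off leaf circles and match the inductive wedge/disjoint-union definition of a geometric tile. What your route buys is independence from Theorem~\ref{tilestiles} and an explicit cactus characterization of the supports of tiles; what it costs is the importation of block-decomposition machinery (including its mild adaptation to multigraphs with loops and parallel edges, and the check that deleting a leaf circle leaves an even-degree support with one fewer block), whereas the paper's induction needs only the single elementary claim about $S_n$ meeting each component in at most one point.
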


    \begin{proof}    
If the support $\Gamma_a$ of $a\in   H $ is a geometric tile, then $a$ expands as a sum of $w(\Gamma_a) $ homology classes represented by embedded circles in $\Gamma_a$    meeting only  in the vertices. By Theorem \ref{reprgraph}, these homology classes are atoms. Clearly, they   are pairwise independent and lie in the tail $H_a$ of~$a$.  Moreover,  any atom in $H_a$ is represented by an embedded  circle  in $\Gamma_a$,  and so  is equal to one of the atoms in our expansion of~$a$. Thus,  all atoms in $H_a$ are pairwise independent, i.e.,   $a$ is a tile.

    Conversely, for any $a\in H $, its tail $H_a$   is a finite additive poset. If~$a$ is a tile, then, by Theorem~\ref{tilestiles},  all atoms $a_1,..., a_n$ of $ H_a$ are pairwise independent and their sum is equal to~$a$.   By Theorem \ref{reprgraph}, the atoms  $a_1,..., a_n$ are represented by embedded circles, respectively,   $S_1,..., S_n \subset \Gamma_a$. Since   $a_1,..., a_n$ are pairwise independent and $a_1+...+ a_n=a$, these~$n$ circles
  meet only in   vertices of~$\Gamma_a$ and their union is equal to $\Gamma_a$.
    We  prove that $\Gamma_a$ is a geometric tile   by induction on   $n$.  If $n=1$, then $a=a_1$ and $\Gamma_a=S_1$ is   a   circle.  We   explain the induction step.  
    Set $b=a_1+\cdots + a_{n-1}$.  Since $a_i \leq a$ for all $i$, we have $b  \leq a$. Since~$a$ is a tile, so is~$b$.   By the induction assumption, the support
   $\Gamma_b= S_1 \cup \cdots \cup S_{n-1}$ of~$b$  is a geometric tile.  We claim that the circle $S_n$  cannot meet a connected component of $\Gamma_b$ in more than one point. This    easily implies  that $\Gamma_a=\Gamma_b \cup S_n$ is a geometric tile. To prove the claim, suppose that $S_n$ meets a component $\Delta$ of~$\Gamma_b$ in  two or more   vertices.  Then there is   an embedded arc $\alpha\subset S_n$   meeting $\Delta$  precisely in its   endpoints. Pick an embedded path $\beta$ in $\Delta$ connecting  the  endpoints of $\alpha$. The union $\alpha \cup \beta$ is an embedded circle in $\Gamma_a$ distinct from  $S_1,..., S_n$. This is  impossible because such a circle represents an atom in $H_a$, and
  all atoms of $H_a$   belong to the list $a_1,..., a_n$. This proves the  claim  above and completes the proof of the theorem.
 \end{proof}


\subsection{Remarks} 1. If a graph $\Gamma$ is contained in a graph $\Gamma'$, then the inclusion homomorphism $   H_1(\Gamma;\Z/2\Z)
\to  H_1(\Gamma';\Z/2\Z)$   is an embedding of additive posets   carrying atoms to atoms and tiles to tiles. A similar claim holds for   CW-complexes.

2. Given a graph $\Gamma$ and atoms   $ b,c \in  H
 = H_1(\Gamma;\Z/2\Z)$, one may ask whether  the circles  in~$\Gamma$ representing  $b$ and $c$      are    disjoint.  A necessary condition is that  $b$, $c$ are independent and $b+c$ is a tile. However, this condition is insufficient because it  leaves open the possibility that the circles representing~$b$ and~$c$  meet in a vertex of~$\Gamma$.
 Consider the  map  $\chi: H\to \Z $ carrying   a  homology class to the   Euler characteristic   of its support.  
 The circles  in~$\Gamma$ representing  the atoms $ b,c \in  H
 $    are   disjoint     if and only if  $b,c$ are independent, $b+c$ is a tile,   and $\chi(b+c)=0$.
  A   homology class $a \in H$
   is represented by  a     disjoint union of embedded circles   if and only if~$a$ is a tile and   any   two atoms  $\leq a$   are represented by disjoint   circles.


 3.  A graph is  \emph{finite} if  its  sets  of vertices and   edges are finite. All geometric tiles are finite graphs and the support of any homology class of a graph is a finite graph.  The  homology poset of a finite graph is   plain and its
  complexity   is smaller than or equal to the number of edges of the graph.
By Section~\ref{Atoms as generators},  all elements of this poset expand as sums of atoms.

    \section{Realization   by   graphs}

  An additive poset   is \emph{realized}  by a graph~$\Gamma$  if it is isomorphic to the homology poset of~$\Gamma$. For instance, for a  set~$I$, the additive poset $2^I_f$ is realized by a wedge of circles, cf. Example~\ref{exewed}.1.

  An additive poset   is \emph{realizable}   if it is realized by a certain graph.


 \begin{lemma}\label{realsub}  If $A $ is a realizable additive poset, then for any $a\in A$, the additive subposets $A_a$ and $A^a$ of~$A$ are realizable.   \end{lemma}

\begin{proof} Since~$A$ is realizable, we can identify~$A$ with the homology poset $H=H_1(\Gamma; \Z/2\Z)$ of a graph~$\Gamma$. Pick any $a\in H$ and consider the support $\Gamma_a\subset \Gamma$  of~$a$ (see Section~\ref{Geometric tiles}). Observe that the inclusion homomorphism $H_1(\Gamma_a; \Z/2\Z) \to H  $ is injective and   its image  is the  group $H_a=\{b\in H \,\vert \, b\leq a\}$. Since the partial order in~$H$ is induced by the inclusion of cycles in~$\Gamma$,  the induced partial order in $H_a$ is induced by the inclusion of cycles in~$\Gamma_a$. Therefore the additive poset $H_a$ is realized by $\Gamma_a$. Similarly,  the edges of $\Gamma$ not belonging to  $\Gamma_a$ and their endpoints form a   graph     $\Delta_a \subset \Gamma$.
The inclusion homomorphism $H_1(\Delta_a; \Z/2\Z) \to H $ is injective and   its image is the group  $H^a \subset H $. Indeed, the support of a   homology class $
b\in H$ is contained  in $\Delta_a$ if and only if $a\leq a+b$.
Therefore the additive poset $H^a$ is realized by $\Delta_a$.
\end{proof}

  The following theorem yields examples of non-realizable additive posets.

 \begin{theor}\label{trealgraphs}  Let $m $ be a positive integer and  let   $A_m$ be an $m$-dimensional $\Z/2\Z$-vector space   with  trivial partial order (see Section~\ref{exam16}.1).  The additive poset $A_m$ is realizable for $m\leq 4$ and non-realizable for   $m\geq 5$.  \end{theor}

\begin{proof}  Observe that:  $A_1$ is realized by the graph formed by one vertex and one edge (which is a loop);  $A_2$ is realized by the graph  formed by   two  vertices and three  connecting them edges;  $A_3$ is realized by the   complete graph on four vertices;  $A_4$ is realized by the     graph   $K_{3,3}$  having  six vertices three of which are connected to each of the other three by a single edge. In each of these cases,  all nonempty 1-cycles are embedded circles. Therefore the partial order in the homology poset is trivial.

It remains to show that there are no    graphs $\Gamma$ with trivial  partial order in  $H=H_1(\Gamma; \Z/2\Z)$    and with
$\dim(H)\geq 5$. Suppose that there is such a~$\Gamma$.
Pick a  nonzero  $a\in H$. Since the partial order in~$H$ is trivial, $a$ is an atom and $H^a=0$. By Theorem \ref{reprgraph}, the support $\Gamma_a$ of~$a$ is  an embedded circle in~$  \Gamma$.   Defining the graph $\Delta=\Delta_a\subset \Gamma$ as in the proof of Lemma~\ref{realsub}, we obtain  that $H_1(\Delta; \Z/2\Z)=H^a=0$.    So,    all connected components of~$\Delta$ are trees.
Let $\{T_i\}_i$   be  the components of $\Delta$ meeting $\Gamma_a$ (in some vertices).    Clearly,
$$\dim (H)=1+ \sum_{i}  (\vert \Gamma_a \cap T_i \vert-1) $$
where    $\vert \Gamma_a \cap T_i \vert\geq 1$ is the number of common vertices of $\Gamma_a$ and $T_i$.
Since $\dim(H) \geq 5$, at least   one of the following three conditions hold:

(a) the circle  $\Gamma_a$ meets a component $T$ of $\Delta$   in $\geq 3$  vertices and meets  another  component~$T'$ of~$\Delta$   in $\geq 2$ vertices;

(b) the circle $\Gamma_a$ meets a component $T$ of $\Delta$  in $\geq 5$  vertices;

(c)   $\Gamma_a$ meets four distinct components  $T_1, T_2, T_3, T_4$ of $\Delta$   in   two vertices each.

  In every case, we will construct two non-empty 1-cycles in $\Gamma_a\cup \Delta$     meeting  only in vertices. The  homology classes $b,c\in H$ of these 1-cycles are nonzero and satisfy $b\leq b+c$. This  contradicts the assumption that the partial order in $H$ is trivial.

For  any  distinct points $u,v$ of a tree, we   denote by $  \overline{uv}$ an embedded path
 connecting $u,v$ in this tree.
 Fix from now on an orientation in the circle $\Gamma_a$. For any distinct  points $u,v\in \Gamma_a$ we let $\widehat{uv} $ be the   arc  on $\Gamma_a$  leading from~$u$ to~$v$. If     $u,v$  also lie on the same component of~$\Delta$, then  we have the embedded circle
 $$S_{uv}= \widehat{uv} \cup \overline{uv} \subset   \Gamma_a\cup \Delta \subset \Gamma.$$

Case (a). Let $u,v,w  $ be   distinct points of $\Gamma_a \cap T$ and let $x,y$ be   distinct points of $\Gamma_a \cap T'$. The points $u,v,w $ split   $\Gamma_a$ into three arcs, and at least one of them, say, $\widehat{uv}$   contains neither~$x$ nor~$y$. 
Consider the   arc   $\alpha \subset \Gamma_a \setminus \widehat{uv}$  with the endpoints  $x,y$. Then   the following embedded  circles are disjoint:  $$S_{uv} = \widehat{uv} \cup \overline{uv} \subset \Gamma_a \cup T    \quad {\text{and}} \quad \alpha \cup \overline{xy}\subset \Gamma_a \cup T'  . $$

Case (b). Let $u,v,w,x,y$ be   distinct points of $\Gamma_a \cap T$ enumerated in the  cyclic order on   $\Gamma_a$.    If the paths $\overline{uv}$ and $\overline{xy}$ in the tree~$T$ meet only in vertices, then so do the   circles $S_{uv},  S_{xy}  $ and we are done.  Suppose that the paths $\overline{uv}$ and $\overline{xy}$ have a  common edge. The complement of the interior of this edge in~$T$ is a union of two disjoint   subtrees  of~$T$. Clearly,   the points $u, v$  lie in different subtrees and so do $x,y$. If $v,x$ lie in the same subtree, then  $y,u$ lie in the other subtree and    $ S_{vx} \cap S_{yu} =\emptyset$. Suppose  that $v,y$ lie in the same subtree and $u,x$ lie in the other subtree.
If~$w$ lies in the same subtree as~$v $ and~$y$, then  $S_{vw}\cap S_{xu}=\emptyset$.
If~$w$ lies in the same subtree as~$u$ and~$ x$, then  $S_{wx}\cap S_{yv}=\emptyset$.

Case (c).
For $i=1,..., 4$, let $u_i, v_i$ be the points of $ \Gamma_a \cap T_i $.  If there are disjoint arcs $\alpha, \beta$ on the circle $ \Gamma_a$ such that $\partial \alpha=\{u_i, v_i\}$ and $\partial \beta=\{u_j, v_j\}$ for some distinct $i, j\in \{1,2,3,4\}$, then
 the following embedded circles are disjoint:  $$\alpha \cup \overline{u_i v_i} \subset \Gamma_a \cup T_i   \quad {\text{and}} \quad \beta \cup \overline{u_j v_j} \subset \Gamma_a \cup T_j  . $$
 If there are no such  $\alpha, \beta$, then up to  enumeration of
 $T_1, T_2, T_3, T_4$ and   permutation of the symbols $u_i, v_i$, the cyclic order of the points $\{u_i, v_i\}_{i=1}^4$ on  $\Gamma_a$ is
 $$u_1, u_2, u_3, u_4, v_1, v_2, v_3, v_4. $$
 Then   the following two unions of arcs represent disjoint 1-cycles:  $$\widehat{u_1u_2} \cup \overline{u_2v_2} \cup \widehat{v_2v_1} \cup \overline{v_1 u_1}  \subset \Gamma_a \cup T_1 \cup T_2   $$
 and
  $$\widehat{u_3u_4} \cup \overline{u_4v_4} \cup \widehat{v_4v_3} \cup \overline{v_3 u_3}  \subset \Gamma_a \cup T_3 \cup T_4  .$$

 \end{proof}

 \begin{corol}\label{trealgcprphs}   Let $A $ be a realizable additive poset. Then for any $a\in A$, the partial order in    $A^a$ is non-trivial or $\dim  A^a \leq 4$. \end{corol}

 As an application, we deduce that the additive poset $A=2^I\oplus B$ is not realizable for any   set~$I$ and any $\ZZ/2\ZZ$-vector space~$B$ of   dimension $\geq 5$ carrying trivial partial order. Indeed, the set~$I$ represents an element $a\in 2^I\subset A$ such that $A^a=B$.

\section{Open questions}

1. Are all  finite additive posets      plain?
In view of Theorem \ref{simple}, this question may be restated as follows: is it true that for any finite additive poset~$A$, the   set of  order-preserving linear functionals on~$A$ is  separating?

2.  A \emph{rank}  of an additive poset $A$ is a map $r:A\to \{0,1,2, ... \}$ such that $r^{-1}(0)=0$,  for any independent $a, b\in A$ we have $r(a+b)=r(a) + r(  b)$  and for any  finite nonempty set $K\subset A$, the integer
 $$\sum_{\emptyset \neq J\subset K} (-1)^{\vert J \vert +1} \,r (\sum_{a\in J} a) $$ is nonnegative and divisible by $2^{\vert K \vert -1} $. An   injective morphism of additive posets   $\varphi : A \to 2^I_f$, where $I$ is a set, determines a rank  of~$A$ by $r(a)=\vert \varphi(a) \vert$ for all $a\in A$. Thus,
 every plain  additive poset has a rank.
Does every  finite additive poset have a rank?  Are all finite additive posets  having a rank  plain?
Is every rank of an additive poset   induced by an injective morphism  into an additive powerset?

3.  The additive poset $A_m$ from Theorem~\ref{trealgraphs} is plain, cf.  Example~\ref{exam22}.1. It is easy to see that  $c(A_m)\leq m (m+1)/2$ for all~$m$ and  $c(A_m)= m (m+1)/2$ for $m \leq 3$. Is the latter equality true  for all~$m$?

4.    If $A$ and~$B$ are plain additive posets, then so is their direct sum $ A\oplus  B$ and $c(A \oplus B)\leq c(A)+c(B)$.  Is this inequality    an equality?

5. For  any $m\geq 1$,   an $m$-antichain in an additive poset $A$ is an antichain in~$A$ whose elements generate a vector subspace of~$A$ of dimension $\leq m$. The  $m$-width  $w_m(A)$ of~$A $   is the maximal number of elements in an $m$-antichain in $A$.  Clearly, $$1=w_1(A) \leq w_2(A) \leq \cdots \leq w_{\dim(A)}(A)=w(A).$$
Compute the $m$-width for all finite additive powersets and all~$m$.


6.  Describe algebraically the class of   posets realizable by  graphs.

%

                     \end{document}